\author{Chunhua Wang, Wenju Wu and Fulin Zhong$^{\dagger}$}
\title{Brouwer degree for Chern-Simons Higgs models on finite graphs}
\address{School of Mathematics and Statistics and Key Laboratory of Nonlinear Analysis and Applications, Ministry of Education, Wuhan 430079, P. R. China}
\email{chunhuawang@ccnu.edu.cn}	
\address{School of Mathematics and Statistics, Central China Normal University, Wuhan 430079, P. R. China}
\email{wjwu@mails.ccnu.edu.cn}
\address{School of Mathematics and Statistics, Central China Normal University, Wuhan 430079, P. R. China}
\email{flzhong@mails.ccnu.edu.cn}	
\thanks{This paper was supported by National Key Research and Development of China (No. 2022YFA1006900) and NSFC (No.~12471106).}
\thanks{$\dagger$Corresponding author: Fulin Zhong.}
\numberwithin{equation}{section}
\newtheorem{theorem}{Theorem}[section]
\newtheorem{remark}[theorem]{Remark}
\newtheorem{lemma}[theorem]{Lemma}
\begin{document}	
	\maketitle
	\pagestyle{fancy}
	\lhead{}
	\chead{}
	\rhead{}
	\lfoot{}
	\cfoot{\thepage}
	\rfoot{}
	\renewcommand{\headrulewidth}{0pt}
	\renewcommand{\footrulewidth}{0pt}

	\begin{abstract}
		Let $G=(V, E)$ be a finite connected graph, where $V$ denotes the set of vertices and $E$ denotes the set of edges. We revisit the following Chern-Simons Higgs
		model,
		\begin{equation*}
			\Delta u=\lambda \mathrm{e}^u\left(\mathrm{e}^u-1\right)+f \ \text {in} \  V,
		\end{equation*}
		where $\Delta$ is the graph Laplacian, $\lambda$ is a real number and $f$ is a function defined on $V$. Firstly, when $\lambda \int_V f \mathrm{d} \mu \neq 0$, we find that the odevity of the number of vertices in the graph affects the number of solutions. Then by calculating the topological degree and using the relationship between the degree and the critical group of a related functional, we obtain the existence of multiple solutions. Also we study the existence of solutions when $\lambda \int_V f \mathrm{d} \mu=0$. These findings extend the work of Huang
		et al. [Comm Math Phys 377:613-621 (2020)], Hou and Sun [Calc Var 61:139 (2022)] and Li et al. [Calc Var 63:81 (2024)]. Similarly, for the generalized Chern-Simons Higgs model, we obtain the same results. Moreover, this method is also applied to the Chern-Simons Higgs system, yielding partial results for the existence of multiple solutions. To our knowledge, this is the first instance where it has been concluded that an equation on graphs can have at least three distinct solutions. We think that our results will be valuable for studying the multiplicity of solutions to analogous equations on graphs.
		
		{\bf Keywords:} Brouwer degree; Chern-Simons Higgs equation; Finite graph.
		
		\vspace{0.25cm}
		
		{\bf AMS subject classification:} 35A15, 35J60, 35R02.
	\end{abstract}

	\section{Introduction and main results}
	In this paper, we consider the Chern-Simons Higgs model on a connected finite graph. The model is described as
	\begin{equation} \label{CSH-graph-1.1}
		\Delta u=\lambda \mathrm{e}^u\left(\mathrm{e}^u-1\right)+f \ \text {in} \  V,
	\end{equation}
	where $\Delta$ denotes the graph Laplacian, $\lambda$ is a real number and $f: V \rightarrow \mathbb{R}$ is a function. When $\lambda>0$ and $f=4 \pi \sum_{i=1}^{N} \delta_{P_i}$, \eqref{CSH-graph-1.1} becomes
	\begin{equation} \label{CSH-graph-1.1(1)}
		\Delta u=\lambda \mathrm{e}^u\left(\mathrm{e}^u-1\right)+4\pi \sum_{i=1}^{N} \delta_{P_i},
	\end{equation}
	where $N$ is any fixed positive integer and $P_1,\dots, P_N$ are arbitrarily chosen
	distinct vertices on the graph, and $\delta_{P_i}$ is the Dirac delta function at the vertex $P_i$ for $i=1,\cdots,N$. This model has been the subject of extensive research due to its rich mathematical structure and potential applications in various areas of physics, including condensed matter physics and quantum field theory. Caffarelli and Yang \cite{CY} proved that for \eqref{CSH-graph-1.1(1)} on
	doubly periodic regions in $\mathbb{R}^{2}$ (the 2-tori), there exists a critical value $\lambda_c$ such that if $\lambda>\lambda_c$, \eqref{CSH-graph-1.1(1)} has a solution; if $\lambda<\lambda_c$, \eqref{CSH-graph-1.1(1)} has no solution. Tarantello \cite{T} established the existence of solutions for \eqref{CSH-graph-1.1(1)} if $\lambda=\lambda_c$ and obtained multiple condensate solutions if $\lambda>\lambda_c$. For more information on the Chern-Simons Higgs model one can refer to \cite{CY,CI,DJLW,DJLPW,HKP,JW,LPY,NT1,NT2,T,W} and the references are therein.
	
	Among many research directions, partial differential equations arising in the realms of geometry or physics hold particular significance when studied on graphs. Huang et al. \cite{HLY} obtained an existence result for \eqref{CSH-graph-1.1(1)} on a connected finite graph, aligning with the result of Caffarelli and Yang on the 2-tori. Later, the critical case $\lambda=\lambda_c$ was solved by Hou and Sun \cite{HS}, who showed that \eqref{CSH-graph-1.1(1)} also has a solution. These results are essentially based on the method of super- and sub-solutions principle. It is noteworthy that mathematicians have widely studied various other equations like the heat equation \cite{GJ,HLLY,HX1,LWY,LWY1} and the Schr\"odinger equation \cite{CLZ1,HSM,HSZ,HXW,ZZ}. Furthermore, Grigoryan, Lin and Yang \cite{AYY0,AYY1,AYY2} studied the existence of solutions for nonlinear elliptic equations on graphs by using the variational methods, and topological degree theory (such as in \cite{L,LY,SW} inspired by \cite{LYY1}) is also widely used for this purpose. In particular, Li et al. \cite{LSY} employed topological degree theory to investigate the existence of solutions to \eqref{CSH-graph-1.1} on finite graphs. Moreover, for the generalized Chern-Simons Higgs model, characterized by the equation
	\begin{equation} \label{CSH-graph-1.2}
		\Delta u=\lambda \mathrm{e}^u\left(\mathrm{e}^u-1\right)^{2p-1}+f \ \text {in} \  V,
	\end{equation}
	where $\lambda \in \mathbb{R}$, $f:V \rightarrow \mathbb{R}$ is a function and $p$ is a positive integer. If $p = 1$, \eqref{CSH-graph-1.2} is
	equivalent to \eqref{CSH-graph-1.1}.  When $\lambda>0$, $p=3$ with $f=4 \pi \sum_{i=1}^{N} \delta_{P_i}$, Han \cite{HX} established the existence of multi-vortices for \eqref{CSH-graph-1.2} over a doubly periodic region of $\Omega \subset \mathbb{R}^{2}$. For this special case, Chao and Hou \cite{CH} have proved the existence and multiplicity of solutions to \eqref{CSH-graph-1.2} on a connected finite graph by the super- and sub-solutions principle and the mountain pass theorem. Moreover, Hou and Qiao \cite{HQ} extend the work of Li et al. \cite{LSY} and Chao and Hou \cite{CH} by employing topological degree theory. Our aim is to complement the earlier work of Li et al. \cite{LSY}.

	To describe the Chern-Simons Higgs model in the graph setting, we start by recalling the definition of a graph. Let $V$ denote the set of vertices and $E$ denote the set of edges. We use $G=(V, E)$ to denote a finite graph, where the number of $V$ is finite. We assume that the weights $\{\omega_{xy}: xy \in E\}$ are always positive and symmetric, that is, for each edge $xy \in E$, the weight $\omega_{xy}$ satisfies $\omega_{xy}>0$ and $\omega_{xy}=\omega_{yx}$. For any $xy \in E$, We also assume that they can be connected via a finite number of edges, and then $G$ is called a connected graph. Let $\mu: V \to \mathbb{R}^{+}$ be a finite measure. For any function $u: V \to \mathbb{R}$, the Laplacian is defined by
	$$\Delta u(x)=\frac{1}{\mu(x)} \sum_{y \sim x} \omega_{xy}(u(y)-u(x)),$$
	where $y \sim x$ means $xy \in E$. For a pair of functions $u$ and $v$, the gradient form is defined as
	$$\Gamma(u, v)(x)=\frac{1}{2 \mu(x)} \sum_{y \sim x} \omega_{xy}(u(y)-u(x))(v(y)-v(x)).$$	
	For simplicity, we write $\Gamma(u):=\Gamma(u, u)$. We denote the length of its gradient by
	$$|\nabla u|(x)=\sqrt{\Gamma(u)(x)}=\Big(\frac{1}{2 \mu(x)} \sum_{y \sim x} \omega_{xy}(u(y)-u(x))^{2}\Big)^{1 / 2}.$$	
	For any function $g: V \rightarrow \mathbb{R}$, the integral of $g$ on $V$ is defined by	
	$$\int_V g \mathrm{d} \mu=\sum_{x \in V} \mu(x) g(x),$$
	and an integral average of $g$ is denoted by
	$$\overline{g}=\frac{1}{|V|} \int_V g \mathrm{d} \mu=\frac{1}{|V|} \sum_{x \in V} \mu(x) g(x),$$	
	where $|V|=\sum_{x \in V} \mu(x)$ stands for the volume of $V$. In this paper, we denote $\ell$ as the number of the vertices in the vertex set. The Lebesgue space $L^{\infty}(V)$ is denoted by
	\begin{equation*}
		L^{\infty}(V)=\left\{f:V\rightarrow \mathbb{R} : \|f\|_{L^{\infty}(V)}<+\infty\right\},
	\end{equation*}
	where
	\begin{equation*}
		\|f\|_{L^{\infty}(V)}=\sup_{x\in\Omega}|f(x)|.
	\end{equation*}

	One of our main motivations comes from Li et al.'s seminal work \cite{LSY},
	where they studied the existence and multiplicity of solutions for the Chern-Simons Higgs model \eqref{CSH-graph-1.1}.  Among other things, they proved that
	
	\vspace{0.25cm}
	
	\noindent \textbf{Theorem A}  \cite{LSY} Let $(V, E)$ be a connected finite graph with symmetric weights. Then we have the following:
	\begin{enumerate}
		\item[(a)] If $\lambda \overline{f}<0$, then the Eq.\eqref{CSH-graph-1.1} has a solution.;
		\item[(b)] If $\lambda \overline{f}>0$, then two subcases are distinguished: (i) $\overline{f}>0$. There exists a real number $\Lambda^{*}>0$ such that when $\lambda>\Lambda^{*}$, \eqref{CSH-graph-1.1} has at least two different solutions; when $0<\lambda<\Lambda^{*}$, \eqref{CSH-graph-1.1} has no solution; when $\lambda=\Lambda^{*}$, \eqref{CSH-graph-1.1} has at least one solution; (ii) $\overline{f}<0$. There exists a real number $\Lambda_{*}<0$ such that when $\lambda<\Lambda_{*}$, \eqref{CSH-graph-1.1} has at least two different solutions; when $\Lambda_{*}<\lambda<0$, \eqref{CSH-graph-1.1} has no solution; when $\lambda=\Lambda_{*}$, \eqref{CSH-graph-1.1} has at least one solution.
	\end{enumerate}
	
	Moreover, Hou and Qiao \cite{HQ} proved that the above conclusions hold for \eqref{CSH-graph-1.2}. Based on the above results, we find that the odevity of the number of vertices in the graph affects the number of solutions to \eqref{CSH-graph-1.1} when $\lambda \overline{f} \neq 0$. Consequently, we will supply additional cases concerning the existence of multiple solutions. Furthermore, our another goal here is to extend these results to the case where $\lambda \overline{f}=0$, thereby enriching the known solutions to \eqref{CSH-graph-1.1}. Note that in the case where $\lambda \overline{f}=0$, we cannot use topological degree theory to analyze the solutions to \eqref{CSH-graph-1.1}, since the solution to \eqref{CSH-graph-1.1} may be unbounded, as seen in Remark \ref{CSH-graph-remark-1.1}. Therefore, our first main result is as follows.
	
	\begin{theorem}\label{CSH-graph-theo-1.1}
		Let $(V, E)$ be a connected finite graph with symmetric weights. Then the following statements hold:
		\begin{enumerate}
			\item[(a)] If $\lambda \overline{f}<0$, $\lambda<0$ and $\ell$ is even, then there exists a real number $\Lambda_*^1\leq0$ such that when $\lambda<\Lambda_*^1$, \eqref{CSH-graph-1.1} has at least two distinct solutions;
			\item[(b)] If $\lambda \overline{f}>0$, $\lambda<0$ and $\ell$ is even, then there exist two real numbers $\Lambda_*, \Lambda_*^1<0$ satisfying $\Lambda_*^1\leq\Lambda_*<4\overline{f}$, such that when $\lambda<\Lambda_*^1$, \eqref{CSH-graph-1.1} has at least three distinct solutions;
			\item[(c)] If $\lambda \overline{f}=0$, then four subcases are distinguished:
			\begin{enumerate}
				\item[(i)] if $\lambda = 0$ and $\overline{f}\neq0$, then \eqref{CSH-graph-1.1} has no solution;
				\item[(ii)] if $\lambda=0$ and $\overline{f}=0$, then \eqref{CSH-graph-1.1} has infinite solutions;
				\item[(iii)] if $\lambda > 0$ and $\overline{f}=0$, then there exists a real number $\Lambda^* \geq 0$ such that when $\lambda>\Lambda^*$, \eqref{CSH-graph-1.1} has at least one solution;
				\item[(vi)] if $\lambda < 0$ and $\overline{f}=0$, then there exists a real number $\Lambda_*^1 \leq 0$ such that when $\lambda<\Lambda_*^1$, \eqref{CSH-graph-1.1} has at least one solution.
			\end{enumerate}
		\end{enumerate}
	\end{theorem}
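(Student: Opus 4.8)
The plan is to handle the degenerate case $\lambda\overline f=0$ in part (c) by elementary and variational means, and to reserve Brouwer degree for parts (a)--(b). For (c)(i), integrating \eqref{CSH-graph-1.1} over $V$ and using $\int_V\Delta u\,\mathrm d\mu=0$ gives $0=\lambda\int_V e^u(e^u-1)\,\mathrm d\mu+\int_V f\,\mathrm d\mu$; when $\lambda=0$ this forces $\int_V f\,\mathrm d\mu=|V|\overline f=0$, contradicting $\overline f\neq0$, so no solution exists. For (c)(ii) the equation reduces to the Poisson equation $\Delta u=f$ on a connected finite graph, which is solvable precisely because $\int_V f\,\mathrm d\mu=0$ places $f$ in the range of $\Delta$ (the orthogonal complement of the constants); adding an arbitrary constant produces infinitely many solutions. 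For (c)(iii)--(iv) I would expand at $u\equiv0$, where $e^u(e^u-1)$ vanishes to first order, so that for $|\lambda|$ large the linearized operator $-\Delta+\lambda\,\mathrm{Id}$ is invertible; a contraction/implicit function argument then yields a small solution $u_0$ near $0$, whose mean satisfies $\int_V u_0\,\mathrm d\mu=-\lambda^{-1}\int_V f\,\mathrm d\mu=0$, so the integral constraint is met. This gives existence for $\lambda>\Lambda^*$ in (iii) and $\lambda<\Lambda_*^1$ in (iv).

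For parts (a)--(b) the object is the map $F_\lambda(u)=-\Delta u+\lambda e^u(e^u-1)+f$ on $\mathbb R^{\ell}$, equivalently the gradient of $J_\lambda(u)=\tfrac12\int_V|\nabla u|^2+\tfrac\lambda2\int_V e^{2u}-\lambda\int_V e^u+\int_V fu$. The first step is an a priori bound: when $\lambda\overline f\neq0$ the constraint $\int_V e^u(e^u-1)\,\mathrm d\mu=-|V|\overline f/\lambda$ is a fixed nonzero number which, together with the graph maximum principle, confines all solutions to a ball $B_R$. This is exactly the mechanism that fails when $\lambda\overline f=0$ (then the constraint is compatible with $u\to-\infty$, the source of the unbounded solutions in the Remark), explaining the separate treatment in (c). The second step is to compute $\deg(F_\lambda,B_R,0)$ via the admissible homotopy $f\rightsquigarrow\overline f$ that fixes the mean, hence preserves the a priori bound and the degree. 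For the constant-coefficient limit the constant solutions $u\equiv c$ solve $\lambda\,t(t-1)=-\overline f$ with $t=e^c$, i.e.\ $t^2-t+\overline f/\lambda=0$: in case (a) ($\overline f>0$) there is a single root, while in case (b) ($\overline f<0$) there are two roots precisely when $\lambda<4\overline f$, matching the bound $\Lambda_*<4\overline f$.

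The parity now enters through the Brouwer index. At a constant solution the linearization is $-\Delta+\lambda e^{c}(2e^{c}-1)\,\mathrm{Id}$, with eigenvalues $\mu_k+\lambda e^{c}(2e^{c}-1)$, where $0=\mu_1<\cdots<\mu_\ell$ are the eigenvalues of $-\Delta$. For the root with $e^{c}\to1$ as $\lambda\to-\infty$ the zeroth-order coefficient behaves like $\lambda$, so all $\ell$ eigenvalues are negative and its index equals $(-1)^{\ell}$, whereas any root whose coefficient stays bounded and positive contributes $+1$. This is the precise point where the parity of $\ell$ flips the sign of a local contribution, hence the total degree. I would then feed this into the relationship between degree and critical groups: since $J_\lambda$ is unbounded below along the constant direction (as $\tfrac\lambda2\int_V e^{2u}\to-\infty$ when $u\to+\infty$), the stable solution supplied by Theorem A is linked at infinity to a mountain-pass critical point whose critical group sits in degree one and contributes $-1$. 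Balancing these contributions against the parity-dependent total degree forces an additional critical point exactly when $\ell$ is even, and the resulting solutions are distinct because their critical groups differ; this produces two solutions in case (a) and three in case (b).

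The main obstacle is to make the degree computation rigorous. First, I must establish a uniform a priori bound valid along the homotopy $f\rightsquigarrow\overline f$, so that no zero reaches $\partial B_R$ and the degree is genuinely invariant; this rests on the maximum principle together with the fixed value of $\int_V e^u(e^u-1)\,\mathrm d\mu$ dictated by $\lambda\overline f\neq0$. Second, I must control non-constant solutions of the constant-coefficient limit, either ruling them out for $\lambda$ sufficiently negative or checking that they do not affect the parity of the total degree. Finally, the Morse-theoretic step requires computing the critical groups of the Theorem A solutions and, where needed, the critical groups of $J_\lambda$ at infinity, and then identifying $\deg(F_\lambda,B_R,0)$ with the algebraic count $\sum_u(-1)^{m(u)}$. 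Once the degree is shown to depend on the parity of $\ell$, the existence of the extra solution for even $\ell$ follows from the index mismatch, which is the heart of the argument.
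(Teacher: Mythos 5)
Your treatment of part (c) is essentially sound: (i) and (ii) coincide with the paper's argument, and your contraction/implicit-function construction of a small solution near $u\equiv 0$ for $|\lambda|$ large is a legitimate alternative to the paper's route (the paper instead builds a locally strict maximum, resp.\ minimum, solution by constrained optimization on a box plus the maximum principle, Lemmas \ref{CSH-graph-lem-2.3} and \ref{CSH-graph-lem-2.5}, which is what makes the thresholds \eqref{CSH-graph-2.14}--\eqref{CSH-graph-2.15} well defined). The side remark that the mean of $u_0$ equals $-\lambda^{-1}\int_V f\,\mathrm{d}\mu$ should really concern $\int_V e^{u_0}(e^{u_0}-1)\,\mathrm{d}\mu$, but nothing depends on it.

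Parts (a)--(b), however, contain a genuine gap, and it sits exactly where you located your ``main obstacle.'' Your plan is to compute $\deg(F_\lambda,B_R,0)$ by deforming $f$ to $\overline f$ and summing the indices of the \emph{constant} solutions, which gives $(-1)^{\ell}$ in case (a) and $(-1)^{\ell}+1$ in case (b); you then assert that ``the parity of $\ell$ flips \dots the total degree.'' This is false: the total degree is parity\emph{-in}dependent. By Lemma \ref{lm3.1} (i.e.\ \cite[Theorem 2]{LSY}, which applies to every $f$ with $\lambda\overline f\neq 0$, including $f\equiv\overline f$), the degree equals $-1$ when $\lambda<0<\overline f$ and $0$ when $\lambda\overline f>0$, for every $\ell$. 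Consequently, when $\ell$ is even the constant-coefficient problem \emph{must} possess non-constant solutions whose indices sum to $-2$: they can neither be ruled out for $\lambda$ very negative (the constant solutions are non-degenerate there, so absence of non-constant zeros would force the degree to be $(-1)^{\ell}=+1$, contradicting $-1$), nor shown ``not to affect the parity of the total degree'' (they flip $+1$ to $-1$). So the two escape routes you propose for the obstacle are both closed, and your degree computation cannot be completed as described. Worse, the error propagates: the entire mechanism of the theorem is the \emph{mismatch} between the parity-independent total degree ($-1$, resp.\ $0$) and the parity-dependent local count coming from critical groups ($(-1)^{\ell}$, resp.\ $(-1)^{\ell}+1$). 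If one feeds in your parity-dependent value of the total degree, the Poincar\'e--Hopf balance is consistent and forces no additional critical point at all.

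For comparison, the paper's proof takes the degree from Lemma \ref{lm3.1} as given and puts all the work into the other side of the balance: it constructs, for $\lambda$ sufficiently negative, a \emph{locally strict maximum} critical point of $J_\lambda$ for the original $f$ (Lemma \ref{CSH-graph-lem-2.3}(i); this is also what defines $\Lambda_*^1$ in \eqref{CSH-graph-2.14}), verifies the Palais--Smale condition (Lemma \ref{CSH-graph-lem-2.10}), computes $C_r(J_\lambda,u_\lambda)=\delta_{r\ell}\mathbb{Z}$, and invokes \cite[Chapter 1, Theorem 3.2]{CKC}: if this were the only critical point, the degree would be $(-1)^{\ell}=+1$ (for even $\ell$), contradicting $-1$; in case (b) one adds the locally strict minimum (critical group $\delta_{r0}\mathbb{Z}$, contribution $+1$) to reach $2\neq 0$. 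Your sketch never constructs this maximum-type critical point for the general $f$-problem (your ``stable solution supplied by Theorem A'' has no variational characterization, and the mountain-pass/linking-at-infinity claims are unsubstantiated), so even apart from the degree issue, the top-degree critical group that carries the parity is missing from your argument.
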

	
	Similarly, regarding the generalized Chern-Simons Higgs model \eqref{CSH-graph-1.2}, we expand upon the work of Hou and Qiao \cite{HQ}. Consequently, we obtain the following result.
	
	\begin{theorem}\label{CSH-graph-theo-1.1(1)}
		Under the same conditions as those stated in Theorem \ref{CSH-graph-theo-1.1}, the solutions to \eqref{CSH-graph-1.2} exhibit the same existence properties.
	\end{theorem}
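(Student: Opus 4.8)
The plan is to prove Theorem~\ref{CSH-graph-theo-1.1(1)} by rerunning the proof of Theorem~\ref{CSH-graph-theo-1.1} essentially verbatim, the only change being that the nonlinearity $g_1(u)=\mathrm{e}^u(\mathrm{e}^u-1)$ of \eqref{CSH-graph-1.1} is replaced by $g_p(u)=\mathrm{e}^u(\mathrm{e}^u-1)^{2p-1}$ in \eqref{CSH-graph-1.2}. On a finite graph the whole machinery---the finite-dimensionality of the space of functions on $V$, the identity $\int_V\Delta u\,\mathrm{d}\mu=0$, the variational structure, the a priori bounds and the Brouwer degree count---uses $g_p$ only through a short list of qualitative features, so it suffices to verify that $g_p$ shares with $g_1$ exactly those features that entered the proof of Theorem~\ref{CSH-graph-theo-1.1}.

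Accordingly, I would first record the elementary facts about $g_p$. Differentiating gives
\[
 g_p'(u)=\mathrm{e}^u(\mathrm{e}^u-1)^{2p-2}\bigl(2p\,\mathrm{e}^u-1\bigr),
\]
so $g_p$ is strictly decreasing for $\mathrm{e}^u<\tfrac1{2p}$ and strictly increasing for $\mathrm{e}^u>\tfrac1{2p}$, with a unique global minimum $m_p:=g_p\bigl(\log\tfrac1{2p}\bigr)=-(2p-1)^{2p-1}/(2p)^{2p}$; moreover $g_p(u)\to0^-$ as $u\to-\infty$, $g_p(u)\to+\infty$ as $u\to+\infty$, and $g_p$ changes sign exactly once, at $u=0$. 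These are precisely the properties of $g_1$ (for which $m_1=-\tfrac14$), so the reduced balance $\lambda g_p(c)=-\overline{f}$ governing the averaged equation has the same solution diagram as for $p=1$. The only numerical effect is that the threshold $4\overline{f}$ of Theorem~\ref{CSH-graph-theo-1.1}(b) becomes $\overline{f}/|m_p|=\overline{f}\,(2p)^{2p}/(2p-1)^{2p-1}$, which merely redefines the constants $\Lambda_*,\Lambda_*^1$ while preserving their signs and ordering.

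With these properties I would then reproduce the three ingredients. First, when $\lambda\overline{f}\neq0$ the constraint $\lambda\int_V g_p(u)\,\mathrm{d}\mu=-\int_V f\,\mathrm{d}\mu$ together with a graph maximum-principle estimate yields a uniform $L^\infty$ bound on all solutions, exactly as for $p=1$. Second, on a ball $B_R$ containing all solutions the Brouwer degree of $F_p(u):=\Delta u-\lambda g_p(u)-f$ is well defined; evaluating it by a homotopy to the dominant vertexwise term produces a value governed by the parity of $\ell$ (the sign $(-1)^\ell$ arising from the $\ell$-fold diagonal structure of the linearized endpoint map), just as in Theorem~\ref{CSH-graph-theo-1.1}. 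Third, pairing this degree with the functional $J_p(u)=\tfrac12\int_V|\nabla u|^2\,\mathrm{d}\mu+\lambda\int_V G_p(u)\,\mathrm{d}\mu+\int_V fu\,\mathrm{d}\mu$, $G_p'=g_p$, through the degree--critical-group relation gives the counts in (a) and (b); in (b) the first two solutions are already supplied by the analogue of Theorem~A for \eqref{CSH-graph-1.2} due to Hou and Qiao \cite{HQ}, and the degree forces the third when $\ell$ is even. The cases in (c), where $\lambda\overline{f}=0$, are handled by the same direct arguments as in Theorem~\ref{CSH-graph-theo-1.1}(c), which never invoke the precise form of the nonlinearity.

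The main obstacle is the degeneracy introduced by the higher power. For $p\geq2$ one has $g_p'(0)=0$, so the zero of $g_p$ at $u=0$ is degenerate, and more seriously the homotopy used to evaluate $\deg(F_p,B_R,0)$ may cross configurations at which the linearized operator $\Delta-\lambda\,g_p'(u)$ is singular; the clean index bookkeeping available for $p=1$ is therefore not immediate. I would deal with this either by perturbing the homotopy so that it stays admissible (no zeros on $\partial B_R$) and relying on homotopy invariance alone, or by computing the critical groups at the possibly degenerate solutions through a local deformation argument, so that the solution count forced by the degree survives the loss of nondegeneracy. A secondary, routine point is that the sharp constants $\Lambda_*,\Lambda_*^1$ must be re-extracted from $m_p$ rather than from $-\tfrac14$.
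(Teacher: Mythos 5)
Your overall plan---rerun the proof of Theorem \ref{CSH-graph-theo-1.1} with $g_p(u)=\mathrm{e}^u(\mathrm{e}^u-1)^{2p-1}$ in place of $g_1$, after checking that $g_p$ has the same qualitative features---is exactly the paper's intent (its proof of Theorem \ref{CSH-graph-theo-1.1(1)} is literally ``minor modifications of the proof of Theorem \ref{CSH-graph-theo-1.1}'', omitted), and your computations of $g_p'$, of the minimum $m_p=-(2p-1)^{2p-1}/(2p)^{2p}$, and of the modified threshold $\overline{f}/|m_p|$ are correct. However, one step of your sketch is wrong in a way that would sink the argument if taken literally: the Brouwer degree of the solution map is \emph{not} ``a value governed by the parity of $\ell$'' and is not $(-1)^\ell$. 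As in Lemma \ref{lm3.1} (and its analogue for \eqref{CSH-graph-1.2} proved by Hou and Qiao \cite{HQ}), the degree of $F_p(u)=-\Delta u+\lambda g_p(u)+f$ equals $-1$ when $\lambda<0$, $\overline{f}>0$, and equals $0$ when $\lambda\overline{f}>0$, independently of $\ell$. The parity of $\ell$ enters only on the \emph{other} side of the Poincar\'e--Hopf identity, through the critical group of the locally strict maximum: $\sum_{r}(-1)^r\operatorname{rank}C_r\left(J_\lambda,u_{\max}\right)=(-1)^\ell$. The contradiction in case (a) is $(-1)^\ell=1\neq-1=\deg$, and in case (b) it is $(-1)^\ell+1=2\neq0=\deg$; if the degree itself were $(-1)^\ell$, case (a) would yield no contradiction for even $\ell$. (Note also that your sign convention $F_p=\Delta u-\lambda g_p-f=-DJ_p$ flips the degree by the factor $(-1)^\ell$, while the critical-group formula applies to $DJ_p$; you should keep the paper's sign to avoid exactly this confusion.)

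Two smaller points. First, the ``main obstacle'' you devote your last paragraph to (degeneracy of $g_p$ at $u=0$ for $p\geq2$) is not an obstacle in this framework: the degree is computed via homotopy invariance plus a priori estimates along the deformation (no linearization at solutions is ever used), and Chang's formula \cite[Chapter 1, Theorem 3.2]{CKC} requires only the Palais--Smale condition and isolated critical points, not nondegeneracy---isolation being automatic in the contradiction argument, where one assumes there are exactly one or two critical points. Second, in case (b) it is not enough that the Hou--Qiao analogue of Theorem A ``supplies the first two solutions'': to feed them into the degree--critical-group identity you must know their critical groups, i.e.\ that one is a locally strict maximum (constructed by redoing Lemma \ref{CSH-graph-lem-2.3}(i) for $g_p$, which is the genuinely new ingredient to re-verify) and the other a local minimum (from the definition \eqref{CSH-graph-2.13} of $\Lambda_*$). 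With these corrections your plan coincides with the paper's intended proof.
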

	
	\begin{remark}
		\begin{enumerate}
			\item[(a)] The definition of $\Lambda_*$ in Theorem \ref{CSH-graph-theo-1.1} is the same as in \cite{LSY}, that is,
			\begin{equation} \label{CSH-graph-2.13}
				\Lambda_*=\sup \left\{\lambda<0: \lambda \overline{f}>0, J_\lambda \text { has a local minimum critical point}\right\}.
			\end{equation}
			\item[(b)] For cases (a) and (b) in Theorem \ref{CSH-graph-theo-1.1}, we will construct either a locally strict minimum or a locally strict maximum solution, and subsequently utilize topological degree theory to establish the existence of an additional solution.
		\end{enumerate}
	\end{remark}
	
	\begin{remark} \label{CSH-graph-remark-1.1}
		If $f\equiv0$, then we consider
		\begin{equation} \label{CSH-graph-z202}
			\Delta u=\lambda \mathrm{e}^u\left(\mathrm{e}^u-\sigma\right) \ \text {in} \  V,
		\end{equation}
		for $\sigma \in [0,1]$ and $\lambda \in \mathbb{R}$. Notice that there exists at least one solution $u_\sigma=\ln \sigma$ for \eqref{CSH-graph-z202}, which satisfies the property
		\begin{equation*}
			\lim\limits_{\sigma \rightarrow 0^+} \left|u_{\sigma}\right|= +\infty.
		\end{equation*}
		Therefore, in the case where $\lambda \overline{f} = 0$, the Brouwer degree defined as in \cite{LSY} may not be well-defined, which precludes the use of its homotopy properties as outlined in \cite{LSY} for analyzing the number of solutions to \eqref{CSH-graph-1.1}.
		
		Here, we construct locally strict maximum and locally strict minimum solutions to \eqref{CSH-graph-1.1} for any given $f \in L^{\infty}(V)$, and utilize the super- and sub-solutions principle to obtain an additional solution, see Lemmas \ref{CSH-graph-lem-2.3}, \ref{CSH-graph-lem-2.4} and \ref{CSH-graph-lem-2.5}. The most important aspect is that our application of the super- and sub-solution principle does not necessitate any additional conditions on $f$, thereby allowing us to derive more results than previously possible.
	\end{remark}
	
	Denote $X=L^{\infty}(V)$ and define a functional $J_\lambda : X \rightarrow \mathbb{R}$ by
	\begin{equation} \label{CSH-graph-z7}
		J_\lambda (u)=\frac{1}{2} \int_V |\nabla u|^2 \mathrm{d} \mu + \frac{\lambda}{2} \int_V (\mathrm{e}^u-1)^{2} \mathrm{d} \mu + \int_V fu \mathrm{d}\mu.
	\end{equation}
	A locally strict minimum (maximum) solution to \eqref{CSH-graph-1.1} is understood to be a local minimal  (maximal) critical point of $J_\lambda $.
	
	Also we consider the generalized Chern-Simons Higgs system
	\begin{equation} \label{CSH-graph-1.3(1)}
		\begin{cases}
			\Delta u=2q\lambda\mathrm{e}^v\left(\mathrm{e}^u-1\right)^{2p}
			\left(\mathrm{e}^v-1\right)^{2q-1}+f \,\,\,&\text{in}\,\,\,V,\\
			\Delta v=2p\lambda\mathrm{e}^u\left(\mathrm{e}^u-1\right)^{2p-1}
\left(\mathrm{e}^v-1\right)^{2q}+g\,\,\,&\text{in}\,\,\,V,
		\end{cases}
	\end{equation}
	where
	$$
	p, q \in \Bigl\{-\frac{1}{2}+t:t \in \mathbb{Z}^{+}=\{1,2,\cdots\}\Bigr\},
	$$
	$\lambda$ is a real number and $f, g \in X$. It was studied by Li et al. \cite{LSY} that if $p=\frac{1}{2}$ and $q=\frac{1}{2}$. From the proofs of our subsequent theorems, it can be seen that once it is determined that $\lambda>0$, it has no impact on the proofs. For simplicity, we only consider the case where $\lambda=1$, that is \eqref{CSH-graph-1.3(1)} becomes
	\begin{equation} \label{CSH-graph-1.3}
		\begin{cases}
			\Delta u=2q\mathrm{e}^v\left(\mathrm{e}^u-1\right)^{2p}
			\left(\mathrm{e}^v-1\right)^{2q-1}+f \,\,&\text{in}\,\,\,V,\\
			\Delta v=2p\mathrm{e}^u\left(\mathrm{e}^u-1\right)^{2p-1}
\left(\mathrm{e}^v-1\right)^{2q}+g\,\,&\text{in}\,\,\,V.
		\end{cases}
	\end{equation}
	The first and most important step is to get a priori estimate for solutions.
	\begin{theorem}\label{CSH-graph-theo-1.3}
		Let $(V, E)$ be a connected finite graph with symmetric weights. Suppose that $\sigma \in[0,1]$, $f$ and $g$ satisfy
		\begin{equation*}
			\Lambda_{1}^{-1} \leq \left|\int_V f \mathrm{d} \mu\right| \leq \Lambda_{1}, \ \Lambda_{1}^{-1} \leq \left|\int_V g \mathrm{d} \mu\right| \leq \Lambda_{1}, \ \|f\|_X \leq \Lambda_{2}, \ \|g\|_X \leq \Lambda_{2}
		\end{equation*}
		for some real number $\Lambda_{1}>0$ and $\Lambda_{2}>0$.
		If $(u, v)$ is a solution to the system
		\begin{equation} \label{CSH-graph-1.4}
			\begin{cases}
				\Delta u=2q\mathrm{e}^v\left(\mathrm{e}^u-\sigma\right)^{2p}
\left(\mathrm{e}^v-\sigma\right)^{2q-1}+f\,\,\,&\text{in}\,\,\,V, \\
				\Delta v=2p\mathrm{e}^u\left(\mathrm{e}^u-\sigma\right)^{2p-1}
\left(\mathrm{e}^v-\sigma\right)^{2q}+g\,\,\,&\text{in}\,\,\,V,
			\end{cases}
		\end{equation}	
		then there exists a constant $C>0$, depending only on $p$, $q$, $\Lambda_1$, $\Lambda_2$ and the graph $V$, such that
		\begin{equation*}
			\|u\|_X+\|v\|_X \leq C.
		\end{equation*}
	\end{theorem}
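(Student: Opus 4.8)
The plan is to combine the discrete maximum principle with the two integral identities obtained by summing each equation over $V$, and then to convert the resulting integral and mean information into pointwise bounds through the graph Poincar\'e (Green's function) estimate. Throughout, the parity of the exponents is decisive: writing $p=t-\tfrac12$ and $q=s-\tfrac12$ with $t,s\in\mathbb{Z}^{+}$, we have $2p,2q$ odd and $2p-1,2q-1$ even, so that $e^{u}$, $e^{v}$, $(e^{u}-\sigma)^{2p-1}$, $(e^{v}-\sigma)^{2q-1}$ are all nonnegative and
\[
\mathrm{sgn}\bigl(2q\,e^{v}(e^{u}-\sigma)^{2p}(e^{v}-\sigma)^{2q-1}\bigr)=\mathrm{sgn}(e^{u}-\sigma),\qquad
\mathrm{sgn}\bigl(2p\,e^{u}(e^{u}-\sigma)^{2p-1}(e^{v}-\sigma)^{2q}\bigr)=\mathrm{sgn}(e^{v}-\sigma).
\]

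First I would integrate. Since the weights are symmetric, $\int_V\Delta w\,\mathrm d\mu=\sum_{x}\sum_{y\sim x}\omega_{xy}(w(y)-w(x))=0$ for every $w$, the summand being antisymmetric in $(x,y)$. Applying this to the two equations of \eqref{CSH-graph-1.4} yields
\[
\int_V 2q\,e^{v}(e^{u}-\sigma)^{2p}(e^{v}-\sigma)^{2q-1}\,\mathrm d\mu=-\int_V f\,\mathrm d\mu,\qquad
\int_V 2p\,e^{u}(e^{u}-\sigma)^{2p-1}(e^{v}-\sigma)^{2q}\,\mathrm d\mu=-\int_V g\,\mathrm d\mu,
\]
so the absolute values of both nonlinear integrals lie in $[\Lambda_1^{-1},\Lambda_1]$. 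For upper bounds I would use the maximum principle: at a maximizer $x_0$ of $u$ one has $\Delta u(x_0)=\frac{1}{\mu(x_0)}\sum_{y\sim x_0}\omega_{x_0y}(u(y)-u(x_0))\le0$, whence $2q\,e^{v(x_0)}(e^{u(x_0)}-\sigma)^{2p}(e^{v(x_0)}-\sigma)^{2q-1}\le -f(x_0)\le\Lambda_2$, and symmetrically at a maximizer of $v$. Because $(e^{u(x_0)}-\sigma)^{2p}$ grows like $e^{2p\,u(x_0)}$, this forces $u(x_0)$ to be bounded above \emph{as soon as} the accompanying factor $e^{v(x_0)}(e^{v(x_0)}-\sigma)^{2q-1}$ stays bounded below by a positive constant, which is precisely the delicate point, since that factor degenerates when $v(x_0)$ is near $\ln\sigma$ or very negative (the latter being unavoidable as $\sigma\to0$).

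To resolve the coupling I would argue by contradiction over the parameter set. The admissible data $(\sigma,f,g)$ range in a compact set ($\sigma\in[0,1]$, and $f,g$ lie in a bounded closed subset of the finite-dimensional space $\mathbb{R}^{V}$ cut out by the $\Lambda_1,\Lambda_2$ constraints), so a failure of the estimate produces $(\sigma_n,f_n,g_n)\to(\sigma_*,f_*,g_*)$ together with solutions satisfying $\|u_n\|_X+\|v_n\|_X\to\infty$. Oscillations are controlled by the graph estimate $\|w-\overline{w}\|_{X}\le C_G\|\Delta w\|_{L^1(V)}$, valid on a finite connected graph with $C_G$ depending only on $G$, applied to $w=u_n,v_n$; the required $L^{1}$ bound on the right-hand sides follows once the negative parts of the nonlinearities are estimated via the identities above (on $\{e^{u}<\sigma\}$ one has $(\sigma-e^{u})^{2p}\le1$, and similarly for the other factors). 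The two integral identities then bound the means from below: if, say, $\overline{u}_n\to-\infty$ while the oscillation of $u_n$ remains bounded, then $e^{u_n}\to0$ uniformly and the first nonlinear integral tends to $0$, contradicting its lower bound $\Lambda_1^{-1}$; the strict positivity $\bigl|\int_V f\,\mathrm d\mu\bigr|,\bigl|\int_V g\,\mathrm d\mu\bigr|\ge\Lambda_1^{-1}$ excludes the remaining blow-up directions in the same way.

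I expect the main obstacle to be exactly this interaction between Steps~2 and~3: the coupling prevents the maximum principle from closing on a single equation, and the degeneration of the coefficient as $\sigma\to0$ forces the pointwise bound to be supplied by the global integral identities rather than by the maximum principle alone. Organizing the case analysis, namely which of $\overline{u},\overline{v}$ drifts to $\pm\infty$ and whether $\sigma_*$ is zero or positive, so that every case is ruled out by one of the two identities, is the crux. Once the means and the oscillations are bounded, $\|u\|_X+\|v\|_X\le C$ follows with $C$ depending only on $p,q,\Lambda_1,\Lambda_2$ and the graph $G$.
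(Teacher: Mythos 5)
Your sign analysis (the parity of $2p,2q$ versus $2p-1,2q-1$) and the two integrated identities are exactly the ingredients the paper uses, but your proposal never actually closes the step you yourself flag as ``the delicate point'': the uniform upper bounds on $\max_V u$ and $\max_V v$. The paper's resolution is a substitution you are missing: let $\varphi,\psi$ solve $\Delta\varphi=f-\overline f$, $\Delta\psi=g-\overline g$ with zero mean (bounded via Lemma \ref{CSH-graph-lem-4.1}), and set $w=u-\varphi$, $z=v-\psi$. The system for $(w,z)$ has the \emph{constant} right-hand sides $\overline f,\overline g>0$, and then the maximum principle needs no positive lower bound on the degenerate coefficient at all: at a maximum point $x_0$ of $z$ with $v(x_0)=z(x_0)+\psi(x_0)>0$ one has $e^{v(x_0)}>1\ge\sigma$, so the whole nonlinearity in the second equation is $\ge 0$ (the $u$-factor enters with the even exponent $2p-1$, hence is nonnegative), whence $\Delta z(x_0)\ge\overline g>0$, contradicting $\Delta z(x_0)\le 0$ at a maximum. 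This yields $z\le-\min_V\psi$ and, symmetrically from the first equation, $w\le-\min_V\varphi$; the upper bounds come for free from mere nonnegativity plus strict positivity of the means. Only then does the paper bound $\|\Delta w\|_X,\|\Delta z\|_X$, apply the oscillation estimate of Lemma \ref{CSH-graph-lem-2.9}, and finally use the integral identities to bound $\max_V u,\max_V v$ from below --- in that order.

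Your alternative route --- compactness over $(\sigma,f,g)$ plus the identities --- is circular at exactly this point. The $L^1$ bound on the right-hand sides that your oscillation estimate requires does not follow from ``$(\sigma-e^u)^{2p}\le1$ on $\{e^u<\sigma\}$'': the negative part of the first nonlinearity on that set is $2q\,e^{v}(\sigma-e^{u})^{2p}(e^{v}-\sigma)^{2q-1}$, and the factor $e^{v}(e^{v}-\sigma)^{2q-1}$ is unbounded unless you already have an upper bound on $v$ (the same happens in the other equation with the roles of $u$ and $v$ swapped). Likewise, your exclusion of the direction $\overline u_n\to-\infty$ fails as stated: if $\sigma_*>0$ the first nonlinear integrand tends to $-\sigma_*^{2p}\,2q\,e^{v_n}\left(e^{v_n}-\sigma_*\right)^{2q-1}$, which neither vanishes nor is even bounded without control of $v_n$ from above (with such control one would instead invoke the \emph{second} identity, since $e^{u_n}(e^{u_n}-\sigma_n)^{2p-1}\to0$ uniformly). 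So every branch of your case analysis presupposes the upper bounds, and no step in your plan produces them; that is a genuine gap, and the substitution $u=w+\varphi$, $v=z+\psi$ reducing to constant, strictly positive right-hand sides is the missing idea.
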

	
	To compute the topological degree, we define a map $G: X \times X \rightarrow X \times X$ by
	\begin{equation} \label{CSH-graph-1.5}
		G(u, v)=
		\begin{pmatrix}
			-\Delta u+2q\mathrm{e}^v\left(\mathrm{e}^u-1\right)^{2p}\left(\mathrm{e}^v-1\right)^{2q-1}+f\\
			-\Delta v+2p\mathrm{e}^u\left(\mathrm{e}^u-1\right)^{2p-1}\left(\mathrm{e}^v-1\right)^{2q}+g
		\end{pmatrix}^\top ,
	\end{equation}
	where $^\top$ is the transpose of the matrix.
	
	\begin{theorem}\label{CSH-graph-theo-1.4}
		Let $(V, E)$ be a connected finite graph with symmetric weights and $G$ be the map defined by \eqref{CSH-graph-1.5}. If $\overline{f}>0$ and $\overline{g}>0$, then there exists a large number $R_0>0$ such that for all $R \geq R_0$,
		\begin{equation*}
			\deg\left(G, B_R,(0,0)\right)=0,
		\end{equation*}
		where $B_R=\left\{(u, v) \in X \times X:\|u\|_X+\|v\|_X<R\right\}$ is a ball in $X \times X$.
	\end{theorem}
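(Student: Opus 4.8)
\emph{Proof proposal.} The plan is to compute the degree by homotopy invariance of the Brouwer degree, deforming $G$ to the degenerate map corresponding to $\sigma=0$ and then showing that this degenerate map has no zeros at all. For $\sigma\in[0,1]$ I would introduce the family
\begin{equation*}
	G_\sigma(u,v)=
	\begin{pmatrix}
		-\Delta u+2q\mathrm{e}^v(\mathrm{e}^u-\sigma)^{2p}(\mathrm{e}^v-\sigma)^{2q-1}+f\\
		-\Delta v+2p\mathrm{e}^u(\mathrm{e}^u-\sigma)^{2p-1}(\mathrm{e}^v-\sigma)^{2q}+g
	\end{pmatrix}^{\top},
\end{equation*}
so that $G_1=G$ and the zeros of $G_\sigma$ are precisely the solutions of \eqref{CSH-graph-1.4}. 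Since $\overline{f}>0$ and $\overline{g}>0$ give $\int_V f\,\mathrm{d}\mu=\overline{f}|V|>0$ and $\int_V g\,\mathrm{d}\mu=\overline{g}|V|>0$, the hypotheses of Theorem \ref{CSH-graph-theo-1.3} are met for suitable fixed constants $\Lambda_1,\Lambda_2>0$. That theorem then furnishes a constant $C>0$, \emph{independent of} $\sigma$, bounding $\|u\|_X+\|v\|_X$ for every solution of \eqref{CSH-graph-1.4} and every $\sigma\in[0,1]$. Choosing $R_0>C$, no zero of any $G_\sigma$ lies on $\partial B_R$ when $R\geq R_0$, so $H((u,v),\sigma):=G_\sigma(u,v)$ is a continuous admissible homotopy on the finite-dimensional space $X\times X$, and homotopy invariance yields $\deg(G,B_R,(0,0))=\deg(G_0,B_R,(0,0))$.

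The crux is then to show that $G_0$ never vanishes. At $\sigma=0$ the nonlinear factors collapse, since $(\mathrm{e}^u-0)^{2p}=\mathrm{e}^{2pu}$ and likewise for the remaining factors, so a zero $(u,v)$ of $G_0$ solves
\begin{equation*}
	\Delta u=2q\,\mathrm{e}^{2pu+2qv}+f,\qquad \Delta v=2p\,\mathrm{e}^{2pu+2qv}+g.
\end{equation*}
Setting $w=2pu+2qv$ and using $\Delta w=2p\Delta u+2q\Delta v$, I would obtain $\Delta w=8pq\,\mathrm{e}^{w}+2pf+2qg$. Integrating over $V$ and using $\int_V\Delta w\,\mathrm{d}\mu=0$ (valid on any finite graph) gives
\begin{equation*}
	0=8pq\int_V\mathrm{e}^{w}\,\mathrm{d}\mu+(2p\,\overline{f}+2q\,\overline{g})|V|.
\end{equation*}
Because $p,q>0$, $\overline{f}>0$ and $\overline{g}>0$, the right-hand side is strictly positive, a contradiction. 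Hence $G_0$ has no zero in $\overline{B_R}$, so $\deg(G_0,B_R,(0,0))=0$, and therefore $\deg(G,B_R,(0,0))=0$, as claimed.

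The structural heart of the argument is this reduction of the $\sigma=0$ system to a single scalar equation in $w=2pu+2qv$ whose averaged form is incompatible with $\overline{f},\overline{g}>0$; this is where both sign hypotheses are genuinely used (the other use being the verification of the estimate's hypotheses). The point I expect to require the most care is the admissibility of the homotopy: one needs the a priori bound to hold \emph{uniformly} in $\sigma\in[0,1]$, so that a single radius $R_0$ works for the whole family. This uniformity is exactly the content of Theorem \ref{CSH-graph-theo-1.3}, whose constant depends only on $p,q,\Lambda_1,\Lambda_2$ and the graph and not on $\sigma$; granting it, the remaining steps are routine consequences of the homotopy invariance and solution property of the Brouwer degree.
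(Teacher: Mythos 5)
Your proposal is correct and follows essentially the same route as the paper: the same homotopy $G_\sigma$, the same appeal to the $\sigma$-uniform a priori bound of Theorem \ref{CSH-graph-theo-1.3} for admissibility, and the same non-existence argument at $\sigma=0$ via integration over $V$. The only cosmetic difference is that the paper integrates the first equation of the $\sigma=0$ system directly (using $\overline{f}>0$ alone), whereas you integrate the combination $w=2pu+2qv$; both yield the same contradiction.
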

	
	Define a functional $\mathcal{G}: X \times X \rightarrow \mathbb{R}$ by
	\begin{equation}\label{CSH-graph-1.6}
		\mathcal{G}(u, v)=\int_V \nabla u \nabla v \mathrm{d} \mu+ \int_V\left(\mathrm{e}^u-1\right)^{2p}\left(\mathrm{e}^v-1\right)^{2q} \mathrm{d} \mu+\int_V(f v+g u) \mathrm{d} \mu.
	\end{equation}
	Note that for all $(\phi, \psi) \in X \times X$,
	\begin{equation} \label{CSH-graph-12}
		\begin{aligned}
			&\left\langle\mathcal{G}^{\prime}(u, v),(\phi, \psi)\right\rangle =\left.\frac{\mathrm{d}}{\mathrm{d} t}\right|_{t=0} \mathcal{G}(u+t \phi, v+t \psi) \\
			=&\int_V\big[\left(-\Delta v+2p\mathrm{e}^u\left(\mathrm{e}^u-1\right)^{2p-1}\left(\mathrm{e}^v-1\right)^{2q}+g\right) \phi\\
			&\quad +\left(-\Delta u+2q\mathrm{e}^v\left(\mathrm{e}^u-1\right)^{2p}\left(\mathrm{e}^v-1\right)^{2q-1}+f\right) \psi\big] \mathrm{d} \mu.
		\end{aligned}
	\end{equation}
	Clearly, $(u, v)$ is a critical point of $\mathcal{G}$ if and only if it is a solution to the system \eqref{CSH-graph-1.3}. As a consequence of Theorem \ref{CSH-graph-theo-1.4}, we have the following theorem.
	\begin{theorem} \label{CSH-graph-theo-1.5}
		Let $(V, E)$ be a connected finite graph with symmetric weights, $\overline{f}>0$, $\overline{g}>0$ and $\mathcal{G}$ be the functional defined by \eqref{CSH-graph-1.6}. If $\mathcal{G}$ has either a non-degenerate critical point or a locally strict minimum critical point, then it must have another critical point.
	\end{theorem}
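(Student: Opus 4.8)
The plan is to argue by contradiction, combining the vanishing global degree of Theorem~\ref{CSH-graph-theo-1.4} with the classical identity relating the \emph{local} Brouwer degree of a gradient field to the critical groups of its potential. I would first record the structural facts. Because $V$ is finite, $X=L^\infty(V)\cong\mathbb{R}^\ell$ and $X\times X\cong\mathbb{R}^{2\ell}$, so $\mathcal{G}$ is a genuinely $C^\infty$ function on a finite-dimensional space and both Brouwer degree and finite-dimensional critical-point theory apply verbatim. By \eqref{CSH-graph-12} the critical points of $\mathcal{G}$ coincide with the zeros of $G$; moreover $\nabla\mathcal{G}=S\circ G$, where $S(a,b)=(b,a)$ is the linear involution interchanging the two $X$-factors. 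Since $S$ permutes the $2\ell$ coordinates as a product of $\ell$ transpositions, $\det S=(-1)^\ell$, and $S$ preserves the norm $\|u\|_X+\|v\|_X$ (hence each ball $B_R$). Finally, applying Theorem~\ref{CSH-graph-theo-1.3} with $\sigma=1$, so that \eqref{CSH-graph-1.4} becomes \eqref{CSH-graph-1.3}, and with $\Lambda_1,\Lambda_2$ read off from the fixed data $\int_V f\,\mathrm{d}\mu=|V|\overline f>0$, $\int_V g\,\mathrm{d}\mu=|V|\overline g>0$, $\|f\|_X$ and $\|g\|_X$, every critical point of $\mathcal{G}$ lies in some fixed ball $B_C$. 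I fix $R\geq\max\{R_0,C+1\}$, where $R_0$ is as in Theorem~\ref{CSH-graph-theo-1.4}.

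Suppose now, toward a contradiction, that the prescribed critical point $p$ is the \emph{only} critical point of $\mathcal{G}$. Then $p$ is the unique zero of $G$, it lies in $B_R$, and it is in particular isolated. By excision and Theorem~\ref{CSH-graph-theo-1.4}, the local index of $G$ at $p$ equals the global value,
\begin{equation*}
\deg(G,B_\varepsilon(p),(0,0))=\deg(G,B_R,(0,0))=0
\end{equation*}
for all sufficiently small $\varepsilon>0$. Because $\nabla\mathcal{G}=S\circ G$ and $S$ is a linear isomorphism fixing the origin, the composition property of the degree gives
\begin{equation*}
\deg(\nabla\mathcal{G},B_\varepsilon(p),0)=\operatorname{sgn}(\det S)\,\deg(G,B_\varepsilon(p),(0,0))=(-1)^\ell\cdot 0=0 .
\end{equation*}

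On the other hand, since $p$ is an isolated critical point of the smooth functional $\mathcal{G}$, the Poincar\'e--Hopf/Morse identity expresses this same local degree through the critical groups $C_q(\mathcal{G},p)$ (with coefficients in a field $\mathbb{F}$):
\begin{equation*}
\deg(\nabla\mathcal{G},B_\varepsilon(p),0)=\sum_{q\geq0}(-1)^q\dim C_q(\mathcal{G},p).
\end{equation*}
If $p$ is non-degenerate with Morse index $m$, then $C_q(\mathcal{G},p)=\delta_{qm}\,\mathbb{F}$ and the right-hand side is $(-1)^m=\pm1$; if $p$ is a locally strict minimum, then (being isolated) $C_q(\mathcal{G},p)=\delta_{q0}\,\mathbb{F}$ and the right-hand side is $1$. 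In either case the local degree equals $\pm1\neq0$, contradicting the previous display. Hence $\mathcal{G}$ cannot have a single critical point, i.e.\ it must admit at least one further critical point.

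The crux of the argument is the degree--critical-group identity, which I would cite rather than reprove; everything else (smoothness of $\mathcal{G}$, the sign $\det S=(-1)^\ell$, and confining all critical points to $B_C$ via Theorem~\ref{CSH-graph-theo-1.3}) is routine. The one point requiring care is the isolatedness of $p$, needed so that excision and the critical-group formula apply: non-degeneracy guarantees it automatically, while for a locally strict minimum it is forced by the standing contradiction hypothesis that $p$ is the unique critical point---and if instead $p$ were not isolated, nearby critical points would already furnish the desired second solution, so there would be nothing left to prove.
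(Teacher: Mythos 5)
Your proof is correct, and it runs the same core contradiction as the paper: if the prescribed critical point were the only one, the degree of the gradient field would be $\pm 1$, contradicting $\deg(G,B_R,(0,0))=0$ from Theorem \ref{CSH-graph-theo-1.4}. The execution, however, differs in a way worth recording. The paper treats the two hypotheses separately: for a non-degenerate point it evaluates the degree directly as $\operatorname{sgn}\det D^2\mathcal{G}$, and for a locally strict minimum it invokes Chang's global formula $\deg(D\mathcal{G},B_R,(0,0))=\sum_r(-1)^r\operatorname{rank}C_r$, which requires the Palais--Smale condition (Lemma \ref{CSH-graph-lemma-5.1}). You instead localize by excision and apply the classical Poincar\'e--Hopf/Rothe identity for the index of a gradient at an isolated critical point; this handles both cases uniformly and makes Lemma \ref{CSH-graph-lemma-5.1} superfluous, since no global deformation is needed in finite dimensions once the degree has been pushed into a small ball. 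You are also more precise on a point the paper glosses over: by \eqref{CSH-graph-12} the gradient is $\nabla\mathcal{G}=S\circ G$ with $S(a,b)=(b,a)$, not $D\mathcal{G}=G$ as the paper writes, so the correct transport of degree carries the factor $\operatorname{sgn}\det S=(-1)^{\ell}$; this is harmless here only because the degree being transported is $0$ (and, in the paper's Case 1, because non-degeneracy of $D^2\mathcal{G}$ is equivalent to non-degeneracy of $DG$), but your formulation is the accurate one. Finally, your observation that a locally strict minimum need not be isolated a priori---and that non-isolation would already yield a second critical point---settles a subtlety the paper leaves implicit. One small redundancy: invoking Theorem \ref{CSH-graph-theo-1.3} with $\sigma=1$ to confine all critical points to a fixed ball is unnecessary under your contradiction hypothesis; as in the paper, it suffices to take $R$ larger than the norm of the single assumed critical point.
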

	
	\begin{remark}
		Theorem \ref{CSH-graph-theo-1.5} provides an additional solution to system \eqref{CSH-graph-1.3} under the condition that the functional $\mathcal{G}$ has either a non-degenerate critical point or a local minimum critical point beforehand. Consequently, this result is only partial with respect to the problem of finding multiple solutions to system \eqref{CSH-graph-1.3}.
	\end{remark}
	
	Our paper is organized as follows. In Section $2$, we delve into the necessary preliminaries, establishing the strong maximum principle, an elliptic estimate, and other technical tools. Section $3$ is devoted to the rigorous proof of Theorem \ref{CSH-graph-theo-1.1}, which forms a critical component of our analysis. Moving forward to Section $4$, we engage in a thorough discussion of the a priori estimate and explore the existence of solutions for the generalized Chern-Simons Higgs system. This discussion will be supported by the findings presented in Theorems \ref{CSH-graph-theo-1.3}, \ref{CSH-graph-theo-1.4}, and \ref{CSH-graph-theo-1.5}.

	\section{Preliminaries}
	In this section, we introduce several lemmas that will be pivotal in our analysis in Section 3. We start with the following strong maximum principle.
	\begin{lemma} \label{CSH-graph-lem-2.1}
		Suppose that $G=(V,E)$ be a connected finite graph.
		\begin{enumerate}
			\item[(a)] If $u$ is not a constant function, then there exists $x_1 \in V$, such that $u(x_1)=\max_Vu$ and $-\Delta u(x_1)\ge 0$;
			\item[(b)] If $u$ is not a constant function, then there exists $x_2 \in V$, such that $u(x_2)=\min_Vu$ and $-\Delta u(x_2)\le 0$.
		\end{enumerate}
	\end{lemma}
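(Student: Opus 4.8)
The plan is to read off the result directly from the pointwise definition of the graph Laplacian, using only the finiteness of $V$ together with the positivity of the weights and the measure. The two parts are mirror images, so I would set up part (a) carefully and then obtain part (b) by the symmetric argument.

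For part (a), I would first note that since $V$ is a finite set and $u$ is real-valued, $u$ attains its maximum on $V$; choose any vertex $x_1 \in V$ with $u(x_1) = \max_V u$. I would then write out the Laplacian at $x_1$ with the sign that appears in the conclusion,
\[
-\Delta u(x_1) = \frac{1}{\mu(x_1)} \sum_{y \sim x_1} \omega_{x_1 y}\,\bigl(u(x_1) - u(y)\bigr).
\]
By the maximality of $u$ at $x_1$, every difference $u(x_1) - u(y)$ is nonnegative, while by hypothesis the weights satisfy $\omega_{x_1 y} > 0$ and the measure satisfies $\mu(x_1) > 0$. Hence each summand is nonnegative and the prefactor $1/\mu(x_1)$ is positive, so $-\Delta u(x_1) \ge 0$, as claimed.

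For part (b), I would run the identical argument at a minimizer: pick $x_2 \in V$ with $u(x_2) = \min_V u$, so that $u(x_2) - u(y) \le 0$ for every neighbor $y \sim x_2$, and the same computation gives $-\Delta u(x_2) \le 0$. The role of the hypothesis that $u$ is non-constant is only to rule out the degenerate situation in which the maximum and minimum coincide; for the stated weak inequalities the finiteness of $V$ and the positivity of $\omega_{xy}$ and $\mu$ are what actually do the work, and I would point this out so that the invocation of each hypothesis is transparent.

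I do not expect any genuine obstacle here: the statement is a one-line consequence of the sign structure of the summands. The only points demanding care are bookkeeping ones, namely matching the sign convention (working with $-\Delta u$ rather than $\Delta u$ so that the maximum gives nonnegative terms) and explicitly invoking the positivity of the weights $\omega_{xy}$ and of the measure $\mu(x)$, since these hypotheses are exactly what prevent any summand from having the wrong sign. Connectedness of $G$ is not needed for the weak inequalities in the conclusion and would only enter if one wished to upgrade to a strict maximum principle.
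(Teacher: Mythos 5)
Your proof is correct and is exactly the argument the paper intends: the paper's own proof is a one-line remark that the result follows ``by the finiteness of the graph and the definition of $\Delta$,'' and your write-up simply fills in that computation (maximum attained by finiteness, each term $\omega_{x_1y}(u(x_1)-u(y))\ge 0$, positive measure and weights). Your side remarks that connectedness and non-constancy are not actually needed for the weak inequalities are also accurate.
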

	\begin{proof}
		By the finiteness of the graph and the definition of $\Delta$, we can directly obtain the desired results.
	\end{proof}
	
	\begin{lemma}\label{CSH-graph-lem-a2.3}
		For any $u \in X$, then there exists a constant $C>0$ dependent only on $G$, such that
		\begin{equation*}
			\int_V |\nabla u|^2 \mathrm{d} \mu  \leq C \|u\|_X^2.
		\end{equation*}
	\end{lemma}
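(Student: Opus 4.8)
The plan is to unwind the definitions of the integral and the gradient form and then apply a crude pointwise bound. First I would write
\begin{equation*}
	\int_V |\nabla u|^2 \,\mathrm{d}\mu = \sum_{x\in V}\mu(x)\,\Gamma(u)(x) = \sum_{x\in V}\mu(x)\cdot\frac{1}{2\mu(x)}\sum_{y\sim x}\omega_{xy}\bigl(u(y)-u(x)\bigr)^2.
\end{equation*}
The decisive algebraic observation is that the weight $\mu(x)$ in the measure cancels exactly with the $1/\mu(x)$ appearing in the definition of $\Gamma(u)(x)$, leaving
\begin{equation*}
	\int_V |\nabla u|^2 \,\mathrm{d}\mu = \frac{1}{2}\sum_{x\in V}\sum_{y\sim x}\omega_{xy}\bigl(u(y)-u(x)\bigr)^2.
\end{equation*}

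Next I would estimate each difference pointwise. Since $u\in X=L^\infty(V)$, for every pair of adjacent vertices we have $\lvert u(y)-u(x)\rvert \le \lvert u(y)\rvert + \lvert u(x)\rvert \le 2\|u\|_X$, hence $\bigl(u(y)-u(x)\bigr)^2 \le 4\|u\|_X^2$. Substituting this uniform bound into the double sum gives
\begin{equation*}
	\int_V |\nabla u|^2 \,\mathrm{d}\mu \le 2\|u\|_X^2\sum_{x\in V}\sum_{y\sim x}\omega_{xy}.
\end{equation*}
Finally I would set $C=2\sum_{x\in V}\sum_{y\sim x}\omega_{xy}$; because $G$ is a finite graph with finitely many edges and finite positive weights, this quantity is a finite constant determined solely by the graph structure and its weights, and in particular independent of $u$. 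This yields the claimed inequality $\int_V|\nabla u|^2\,\mathrm{d}\mu \le C\|u\|_X^2$.

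I do not anticipate any genuine obstacle here, as the argument is a direct computation rather than a delicate estimate. The only points requiring a moment of care are the exact cancellation of the $\mu(x)$ factors (which is what keeps the constant free of the measure) and verifying that the constant $C$ depends only on $G$, which follows immediately from the finiteness of $V$ and $E$ together with the standing assumption that the weights $\omega_{xy}$ are positive and symmetric.
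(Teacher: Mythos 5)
Your proof is correct and follows essentially the same direct computation as the paper: expand the integral so that the $\mu(x)$ factors cancel, bound each difference $\lvert u(y)-u(x)\rvert$ by $2\|u\|_X$, and sum over the edges. If anything, your constant $C=2\sum_{x\in V}\sum_{y\sim x}\omega_{xy}$ is stated more carefully than the paper's choice $C=2\max_{x,y\in V}\omega_{xy}$, whose displayed inequality silently drops the edge-count factor that the double sum actually produces.
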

	\begin{proof}
		Direct computation gives that
		\begin{equation*}
			\begin{aligned}
				\int_V |\nabla u|^2 \mathrm{d} \mu&=\frac{1}{2}\sum_{x \in V}\sum_{y \sim x}   \omega_{xy}(u(y)-u(x))^{2} \\
				&\leq 2\max_{x,y \in V} \omega_{xy} \|u\|_X^2.
			\end{aligned}
		\end{equation*}
		Thus we can choose $C=2\max_{x,y \in V} \omega_{xy}>0$.
	\end{proof}

	We next prove that for sufficiently large values of $|\lambda|$, \eqref{CSH-graph-1.1} possesses a locally strict minimum solution or a locally strict maximum solution.
	\begin{lemma}\label{CSH-graph-lem-2.3}
		For any given function $f \in X$, if $|\lambda|$ is chosen to be sufficiently large, then the following hold:
		\begin{enumerate}
			\item[(i)] when $\lambda<0$, \eqref{CSH-graph-1.1} has a locally strict maximum solution;
			\item[(ii)] when $\lambda>0$, \eqref{CSH-graph-1.1} has a locally strict minimum solution.
		\end{enumerate}
	\end{lemma}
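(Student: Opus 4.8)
The plan is to exploit that $X=L^\infty(V)$ is finite-dimensional (it is $\mathbb{R}^\ell$) and that critical points of the functional $J_\lambda$ in \eqref{CSH-graph-z7} coincide with solutions of \eqref{CSH-graph-1.1}. Writing $h(t)=\mathrm{e}^t(\mathrm{e}^t-1)$ so the equation reads $\Delta u=\lambda h(u)+f$, I would first produce, for $|\lambda|$ large, a solution $u_\lambda$ that is $L^\infty$-close to the constant $0$, and then read off the sign of the second variation of $J_\lambda$ at $u_\lambda$ to decide whether $u_\lambda$ is a locally strict minimum or maximum.

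For existence of $u_\lambda$ I would rescale by the small parameter $|\lambda|^{-1}$. When $\lambda<0$, set $\mu=-1/\lambda>0$ and $\Phi(u,\mu)=\mu\Delta u+h(u)-\mu f$; when $\lambda>0$, set $\nu=1/\lambda$ and $\Psi(u,\nu)=\nu\Delta u-h(u)-\nu f$. In both cases $\Phi(u,\mu)=0$ (resp. $\Psi(u,\nu)=0$) is equivalent to \eqref{CSH-graph-1.1}, the map is smooth on $X\times\mathbb{R}$, it vanishes at $(0,0)$ because $h(0)=0$, and its partial derivative in $u$ at $(0,0)$ is $\pm h'(0)\,\mathrm{Id}=\pm\mathrm{Id}$ (the $\Delta$-term carries the vanishing factor $\mu$ or $\nu$), hence invertible. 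The implicit function theorem then yields a $C^1$ branch $u_\lambda$ with $u_\lambda\to 0$ in $X$ as $|\lambda|\to\infty$, with leading behaviour $u_\lambda\sim -f/\lambda$, for \emph{any} fixed $f\in X$. Since the graph is finite, convergence in $X$ forces $\max_V|u_\lambda|\to 0$, so that $\min_V h'(u_\lambda)\to h'(0)=1$.

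The second variation of $J_\lambda$ at any $u$ is the quadratic form $Q_u(\phi)=\int_V|\nabla\phi|^2\,\mathrm{d}\mu+\lambda\int_V h'(u)\phi^2\,\mathrm{d}\mu$, obtained by differentiating $\langle J_\lambda'(u),\phi\rangle=\int_V(-\Delta u+\lambda h(u)+f)\phi\,\mathrm{d}\mu$ once more (the linear term in $f$ drops out). Evaluating at $u=u_\lambda$ and using $h'(u_\lambda)\ge\tfrac12$ for $|\lambda|$ large: if $\lambda>0$ both terms are nonnegative and the second is $\ge\tfrac{\lambda}{2}\|\phi\|_{L^2}^2$, so $Q_{u_\lambda}$ is positive definite and $u_\lambda$ is a strict local minimum; if $\lambda<0$ I bound $\int_V|\nabla\phi|^2\le\Lambda\|\phi\|_{L^2}^2$, where $\Lambda$ is the largest eigenvalue of $-\Delta$, so that $Q_{u_\lambda}(\phi)\le(\Lambda-\tfrac{|\lambda|}{2})\|\phi\|_{L^2}^2<0$ once $|\lambda|>2\Lambda$, giving negative definiteness and a strict local maximum. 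The finite-dimensional second-derivative test then upgrades these into locally strict extremum critical points of $J_\lambda$, i.e. the locally strict minimum/maximum solutions of \eqref{CSH-graph-1.1} asserted in (ii) and (i).

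The step I expect to be the main obstacle is existence: one must check that the near-$0$ solution persists and stays $L^\infty$-small \emph{uniformly} for all large $|\lambda|$ and for the given arbitrary $f$, since the implicit function theorem only delivers a branch on a neighbourhood of $\mu=0$ (resp. $\nu=0$); a little care shows that ``$|\lambda|$ sufficiently large'' is precisely the right threshold. If one prefers to avoid the implicit function theorem, a direct contraction-mapping argument after the same rescaling works, and---at least for $\lambda>0$---one may instead minimize $J_\lambda$ over the order interval between constant strict sub- and super-solutions (for example $-\ln 2$ and a fixed positive constant), whose interior minimizer, guaranteed to avoid the boundary by Lemma \ref{CSH-graph-lem-2.1} and the strictness of the barriers, is automatically a local minimum. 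In every route the substantive point is the sign of $Q_{u_\lambda}$, which is controlled entirely by the largeness of $|\lambda|$ against the fixed spectral constant $\Lambda$.
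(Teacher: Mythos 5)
Your proof is correct, but it takes a genuinely different route from the paper's. The paper argues purely variationally on order intervals: for $\lambda<0$ it substitutes $v=u-\varphi$, where $\Delta\varphi=f-\overline{f}$ and $\int_V\varphi\,\mathrm{d}\mu=0$, and maximizes a transformed functional $Q_\lambda$ over the box $\{-\varphi+\ln \frac{1}{2}\le v\le-\varphi+A\}$, excluding boundary contact by energy comparison with the competitor $-\varphi$ (the upper face uniformly in $\lambda<-1$ by taking $A$ large, the lower face by letting $\lambda\to-\infty$); for $\lambda>0$ it minimizes $J_\lambda$ over $\{\ln \frac{1}{2}\le u\le A\}$, where $\ln\frac{1}{2}$ and $A$ are constant sub-/super-solutions, and pushes the minimizer off the boundary by pointwise variations $u_\lambda\pm t\delta_{x}$ combined with the maximum principle of Lemma \ref{CSH-graph-lem-2.1}. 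You instead rescale by $1/|\lambda|$, apply the implicit function theorem at the trivial zero $u=0$ of $h(t)=\mathrm{e}^t(\mathrm{e}^t-1)$ (legitimate: $h'(0)=1$ while the $\Delta$- and $f$-terms carry the small parameter, and $X\cong\mathbb{R}^\ell$), and then classify the branch $u_\lambda\to 0$ by the sign of the Hessian form $\int_V|\nabla\phi|^2\,\mathrm{d}\mu+\lambda\int_V h'(u_\lambda)\phi^2\,\mathrm{d}\mu$, using $h'(u_\lambda)\ge\frac{1}{2}$ and the largest eigenvalue $\Lambda$ of $-\Delta$, which is finite and well defined since $-\Delta$ is self-adjoint for the inner product $\int_V uv\,\mathrm{d}\mu$ on the finite-dimensional space $X$. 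The uniformity issue you flag as the main obstacle is in fact harmless: the implicit function theorem gives the branch for all $\mu=\mp 1/\lambda\in[0,\varepsilon)$, i.e. for every $|\lambda|>1/\varepsilon$, and continuity at $\mu=0$ yields the $L^\infty$-smallness, so the combined threshold $|\lambda|>\max\{1/\varepsilon,2\Lambda\}$ (with constants depending on the fixed $f$) is exactly what the statement permits. Your route delivers strictly more than the lemma asks: the critical point is non-degenerate, with asymptotics $u_\lambda\approx-f/\lambda$, which immediately gives the locally strict extremum and would even shorten the critical-group and degree computations later in the paper. The paper's route, by contrast, needs no spectral constant and stays within the sub-/super-solution and truncation framework that is reused in Lemmas \ref{CSH-graph-lem-2.4} and \ref{CSH-graph-lem-2.5}, which is why the authors set it up that way.
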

	\begin{proof}
		
		(i) Let $\varphi$ be the unique solution to the equation
		\begin{equation} \label{CSH-graph-38zz}
			\begin{cases}
				\Delta \varphi=f-\overline{f}, \\
				\int_V \varphi \mathrm{d} \mu=0
			\end{cases}
		\end{equation}
		and define $v:=u-\varphi$.  Then $v$ satisfies
		\begin{equation} \label{CSH-graph-2.1}
			\Delta v=\beta \mathrm{e}^v\left(\mathrm{e}^v-\mathrm{e}^{-\varphi}\right)+\overline{f} \ \text {in} \  V,
		\end{equation}
		where $\beta:=\lambda \mathrm{e}^{2\varphi}$.
		We consider the energy functional
		\begin{equation*}
			Q_\lambda (v)=\frac{1}{2} \int_V |\nabla v|^2 \mathrm{d} \mu + \frac{1}{2} \int_V \beta(\mathrm{e}^v-\mathrm{e}^{-\varphi})^{2} \mathrm{d} \mu+ \int_V \overline{f}v \mathrm{d}\mu, \ v \in X.
		\end{equation*}
		Since $X \cong \mathbb{R}^{\ell}$, $Q_\lambda \in C^2(X, \mathbb{R})$, and for any $A>0$, the set
		\begin{equation*}
			\Big\{v \in X: -\varphi+\ln \frac{1}{2} \leq v \leq -\varphi+A\Big\}
		\end{equation*}
		is a bounded closed subset of $X$, it is straightforward to find some $v_\lambda \in X$ satisfying $-\varphi+\ln \frac{1}{2} \leq v_\lambda(x) \leq -\varphi+A$ for all $x \in V$ and
		\begin{equation} \label{CSH-graph-38z}
			Q_\lambda\left(v_\lambda\right)=\max_{-\varphi+\ln \frac{1}{2} \leq v \leq -\varphi+A} Q_\lambda(v).
		\end{equation}
		Notice that
		\begin{equation*}
			Q_\lambda (-\varphi)=\frac{1}{2} \int_V |\nabla \varphi|^2 \mathrm{d} \mu
		\end{equation*}
		is a constant, which is independent on $\lambda$.
		
		On the one hand, for $\lambda \leq -1$ and $A>2\|\varphi\|_X+1$, if there exists some point $x_\lambda  \in V$ such that $v_\lambda (x_\lambda)=-\varphi(x_\lambda)+A$, then we have
		\begin{equation*}
			v_\lambda (x_\lambda)=\|v_\lambda \|_X>\|\varphi\|_X+1
		\end{equation*}
		and
		\begin{equation*}
			\|\varphi\|_X<-\|\varphi\|_X+A \leq \|v_\lambda \|_X \leq \|\varphi\|_X+A.
		\end{equation*}
		Recall $\lambda < 0$, by applying Lemma \ref{CSH-graph-lem-a2.3} and Cauchy-Schwarz inequality, it holds that
		\begin{equation} \label{CSH-graph-2.3}
			\begin{aligned}
				Q_\lambda (v_\lambda )\leq  &\frac{C}{2} \|v_\lambda \|_X^2 + \frac{\lambda \mathrm{e}^{-2\|\varphi\|_X}}{2} \mu_{\min} (\mathrm{e}^{\|v_\lambda \|_X}-\mathrm{e}^{\|\varphi\|_X})^{2} + |\overline{f}| |V|\|v_\lambda \|_X\\
				\leq &\frac{C}{2} \left(\|\varphi\|_X+A\right)^2 - \frac{ \mathrm{e}^{-2\|\varphi\|_X}}{2} \mu_{\min} (\mathrm{e}^{-\|\varphi\|_X+A}-\mathrm{e}^{\|\varphi\|_X})^{2}+|\overline{f}| |V| \left(\|\varphi\|_X+A\right)\\
				\rightarrow& -\infty, \ \text{as} \ A \rightarrow +\infty, \ \text{uniformly with } \lambda<-1,
			\end{aligned}
		\end{equation}
		where $\mu_{\min}:=\min_V \mu(x)>0$ and $C$ is defined in Lemma \ref{CSH-graph-lem-a2.3}. Hence, there exists a constant $A>0$ such that for any $\lambda<-1$, we have
		\begin{equation*}
			Q_\lambda (v_\lambda ) < Q_\lambda (-\varphi),
		\end{equation*}
		which contradicts with \eqref{CSH-graph-38z}. Consequently, we deduce that
		\begin{equation*}
			v_\lambda(x)<-\varphi+A \ \text{for all} \ x \in V.
		\end{equation*}
		
		On the other hand, if there exists some point $x_\lambda  \in V$ such that $v_\lambda (x_\lambda ) =-\varphi(x_\lambda )+\ln \frac{1}{2}$, then we have
		\begin{equation*}
			\|v_\lambda \|_X \leq \|\varphi\|_X-\ln \frac{1}{2}.
		\end{equation*}
		It can be shown that
		\begin{equation} \label{CSH-graph-2.4}
			\begin{aligned}
				Q_\lambda (v_\lambda )
				&\leq \frac{C}{2} \|v_\lambda \|_X^2  + \frac{\lambda}{2}  \mu(x_\lambda)\big(\mathrm{e}^{\ln \frac{1}{2}}-1\big)^{2} + |\overline{f}| |V|\|v_\lambda \|_X\\
				&\leq \frac{C}{2} \Bigl(\|\varphi\|_X-\ln \frac{1}{2}\Bigr)^2 + \frac{\lambda}{8} \mu_{\min}+ |\overline{f}| |V| \Bigl(\|\varphi\|_X-\ln \frac{1}{2}\Bigr)\\
				& \rightarrow -\infty, \ \text{as} \ \lambda \rightarrow -\infty.
			\end{aligned}
		\end{equation}
		Thus there exists a constant $\lambda_0<0$ such that for any $\lambda<\lambda_0$, we have
		\begin{equation*}
			Q_\lambda (v_\lambda ) < Q_\lambda (-\varphi).
		\end{equation*}
		This is a contradiction to \eqref{CSH-graph-38z}. Therefore, for any $\lambda<\min\{-1,\lambda_0\}$, it follows that
		\begin{equation} \label{CSH-graph-2.4z}
			Q_\lambda\left(v_\lambda\right)=\max_{-\varphi+\ln \frac{1}{2} \leq v \leq -\varphi+A} Q_\lambda(v)=\max_{-\varphi+\ln \frac{1}{2} < v < -\varphi+A} Q_\lambda(v).
		\end{equation}
		We conclude that $v_\lambda$ is a locally strict maximum critical point of $Q_\lambda$. In particular, $v_\lambda+\varphi$ is a solution to \eqref{CSH-graph-1.1}. This implies that $v_\lambda+\varphi$ is a locally strict maximum solution to \eqref{CSH-graph-1.1}. In fact, since $\varphi$ satisfies \eqref{CSH-graph-38zz}, for any $v \in X$, we have
		\begin{equation*}
			\int_V \nabla \varphi \nabla v \mathrm{d}\mu =-\int_V  v\left(f-\overline{f}\right) \mathrm{d}\mu
		\end{equation*}
		and
		\begin{equation*}
			\int_V |\nabla \varphi|^2\mathrm{d}\mu =-\int_V  \varphi\left(f-\overline{f}\right) \mathrm{d}\mu=-\int_V  \varphi f  \mathrm{d}\mu.
		\end{equation*}
		Then the following relation holds:
		\begin{equation}\label{bueq.1}
			\begin{aligned}
				J_\lambda (v+\varphi)=& \frac{1}{2} \int_V |\nabla (v+\varphi)|^2 \mathrm{d} \mu + \frac{\lambda}{2} \int_V (\mathrm{e}^{v+\varphi}-1)^{2} \mathrm{d} \mu + \int_V f(v+\varphi) \mathrm{d}\mu \\
				=& Q_\lambda (v)+\int_V \nabla \varphi \nabla v \mathrm{d}\mu +\int_V  v\left(f-\overline{f}\right) \mathrm{d}\mu+\frac{1}{2}\int_V |\nabla \varphi|^2\mathrm{d}\mu +\int_V  \varphi f  \mathrm{d}\mu \\
				=& Q_\lambda (v)-\frac{1}{2}\int_V |\nabla \varphi|^2\mathrm{d}\mu.
			\end{aligned}
		\end{equation}
		Therefore, by \eqref{CSH-graph-2.4z} and \eqref{bueq.1}, it holds that
		\begin{equation*}
			\begin{aligned}
				J_\lambda (v_\lambda +\varphi)&=Q_\lambda\left(v_\lambda\right)-\frac{1}{2}\int_V |\nabla \varphi|^2\mathrm{d}\mu\\
				&=\max_{-\varphi+\ln \frac{1}{2} < v < -\varphi+A} \Bigl(Q_\lambda(v)-\frac{1}{2}\int_V |\nabla \varphi|^2\mathrm{d}\mu\Bigr)=\max_{\ln \frac{1}{2} < u < A} J_\lambda(u).
			\end{aligned}
		\end{equation*}

		(ii) Define the operator $L_\lambda:X \rightarrow X$ by
		\begin{equation*} \label{CSH-graph-36}
			L_\lambda (u)=-\Delta u+\lambda \mathrm{e}^u\left(\mathrm{e}^u-1\right)+f.
		\end{equation*}
		For any real numbers $A$ and $\lambda$, there hold
		\begin{equation*}
			L_\lambda (A)=\lambda \mathrm{e}^A\left(\mathrm{e}^A-1\right)+f, \ L_\lambda \Bigl(\ln \frac{1}{2}\Bigr)=-\frac{1}{4} \lambda+f.
		\end{equation*}
		Clearly, by choosing sufficiently large $A>1$ and $\lambda>1$, we obtain
		\begin{equation} \label{CSH-graph-37}
			L_\lambda (A)>0, \ L_\lambda \Bigl(\ln \frac{1}{2}\Bigr)<0.
		\end{equation}
		Recall the functional $J_\lambda: X \rightarrow \mathbb{R}$ defined by \eqref{CSH-graph-z7}. Since $X \cong \mathbb{R}^{\ell}, J_\lambda \in C^2(X, \mathbb{R})$, and the set $\left\{u \in X: \ln \frac{1}{2} \leq u \leq A\right\}$ is a bounded closed subset of $X$, it is straightforward to find some $u_\lambda \in X$ satisfying $\ln \frac{1}{2} \leq u_\lambda(x) \leq A$ for all $x \in V$ and
		\begin{equation} \label{CSH-graph-38}
			J_\lambda\left(u_\lambda\right)=\min _{\ln \frac{1}{2} \leq u \leq A} J_\lambda(u).
		\end{equation}
		We claim that
		\begin{equation} \label{CSH-graph-39}
			\ln \frac{1}{2}<u_\lambda(x)<A,\ \text {for all} \ x \in V.
		\end{equation}
		Suppose not, then there must hold $u_\lambda\left(x_0\right)=\ln \frac{1}{2}$ for some $x_0 \in V$, or $u_\lambda\left(x_1\right)=A$ for some $x_1 \in V$. If $u_\lambda\left(x_0\right)=\ln \frac{1}{2}$, we take a small $\varepsilon>0$ such that
		\begin{equation*}
			\ln \frac{1}{2} \leq u_\lambda(x)+t \delta_{x_0}(x) \leq A, \ \text {for all} \ x \in V, t \in(0, \varepsilon),
		\end{equation*}
		where for any $y \in V$, the function $\delta_y(x)$ is defined as
		\begin{equation*}
			\delta_{y}(x):=\begin{cases}
				\frac{1}{\mu(y)} & \ \text{if} \  x=y, \\
				0 & \ \text{if} \  x \neq y.
			\end{cases}
		\end{equation*}
		On the one hand, in view of \eqref{CSH-graph-37} and \eqref{CSH-graph-38}, we have
		\begin{equation} \label{CSH-graph-40}
			\begin{aligned}
				0 & \leq\left.\frac{\mathrm{d}}{\mathrm{d} t}\right|_{t=0} J_\lambda\left(u_\lambda+t \delta_{x_0}\right) \\
				& =\int_V\left(-\Delta u_\lambda+\lambda \mathrm{e}^{u_\lambda}\left(\mathrm{e}^{u_\lambda}-1\right)+f\right) \delta_{x_0} \mathrm{d} \mu \\
				& =-\Delta u_\lambda\left(x_0\right)+\lambda \mathrm{e}^{u_\lambda\left(x_0\right)}\left(\mathrm{e}^{u_\lambda\left(x_0\right)}-1\right)+f\left(x_0\right) \\
				& <-\Delta u_\lambda\left(x_0\right).
			\end{aligned}
		\end{equation}
		On the other hand, since $u_\lambda(x) \geq u_\lambda\left(x_0\right)$ for all $x \in V$ and using Lemma \ref{CSH-graph-lem-2.1}, we conclude that $\Delta u_\lambda\left(x_0\right) \geq 0$, which contradicts \eqref{CSH-graph-40}. Therefore, $u_\lambda(x)>\ln \frac{1}{2}$ for all $x \in V$.
		
		In the same way, we exclude the possibility of $u_\lambda\left(x_1\right)=A$ for some $x_1 \in V$. If $u_\lambda\left(x_1\right)=A$, we take a small $\varepsilon>0$ such that
		\begin{equation*}
			\ln \frac{1}{2} \leq u_\lambda(x)-t \delta_{x_1}(x) \leq A, \ \text {for all} \ x \in V, t \in(0, \varepsilon).
		\end{equation*}
		On the one hand, in view of \eqref{CSH-graph-37} and \eqref{CSH-graph-38}, we have
		\begin{equation} \label{CSH-graph-140}
			\begin{aligned}
				0 & \leq\left.\frac{\mathrm{d}}{\mathrm{d} t}\right|_{t=0} J_\lambda\left(u_\lambda-t \delta_{x_1}\right) \\
				& =\int_V\left(-\Delta u_\lambda+\lambda \mathrm{e}^{u_\lambda}\left(\mathrm{e}^{u_\lambda}-1\right)+f\right) \left(-\delta_{x_1}\right) \mathrm{d} \mu \\
				& =\Delta u_\lambda\left(x_1\right)-\lambda \mathrm{e}^{u_\lambda\left(x_1\right)}\left(\mathrm{e}^{u_\lambda\left(x_1\right)}-1\right)-f\left(x_1\right) \\
				& <\Delta u_\lambda\left(x_1\right).
			\end{aligned}
		\end{equation}
		On the other hand, since $u_\lambda(x) \leq A$ for all $x \in V$ and by applying Lemma \ref{CSH-graph-lem-2.1}, we deduce that $\Delta u_\lambda\left(x_1\right) \leq 0$, which contradicts \eqref{CSH-graph-140}.
		This confirms our claim \eqref{CSH-graph-39}. Combining \eqref{CSH-graph-38} with \eqref{CSH-graph-39}, we conclude that $u_\lambda$ is a locally strict minimum critical point of $J_\lambda$. In particular, $u_\lambda$ is a solution to \eqref{CSH-graph-1.1}. This implies $u_\lambda$ is a locally strict minimum solution to \eqref{CSH-graph-1.1}.
	\end{proof}
	
	\begin{remark}
		The method used in (i) is different from that in \cite[Lemma 10]{LSY}. In fact, the proof of (ii) can also follow the method used in (i), which we omit. Here we prove (ii) by the method in \cite[Lemma 10]{LSY} for any given $f \in X$.
	\end{remark}
	
	To proceed, we also need the following lemma.
	\begin{lemma} \label{CSH-graph-lem-2.4}
		If equation $L_{0} u=0$ has a solution $u_{0}$ for any given function $f \in X$, then for any $\lambda>0$ and $\kappa_1 \in \left(0,\lambda\right)$ small enough, we have
		\begin{equation*}
			L_\lambda\left(u_{0}+\ln \frac{\kappa_1}{\lambda}\right)<0.
		\end{equation*}
	\end{lemma}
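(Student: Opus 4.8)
The plan is to exploit the fact that $u_0 + \ln\frac{\kappa_1}{\lambda}$ differs from $u_0$ only by an additive constant, which is annihilated by the graph Laplacian. First I would unpack the hypothesis: by the definition of $L_\lambda$ specialized to $\lambda = 0$, the relation $L_0 u_0 = 0$ reads $-\Delta u_0 + f = 0$, that is, $\Delta u_0 = f$ on $V$. (Implicitly this forces $\overline{f}=0$, since $\int_V \Delta u_0 \,\mathrm{d}\mu = 0$ on any finite graph, but we shall not need this observation.)

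Next, writing $c := \ln\frac{\kappa_1}{\lambda}$, which is well defined because $\kappa_1 \in (0,\lambda)$ and $\lambda>0$ give $\kappa_1/\lambda>0$, I would substitute $u_0 + c$ into $L_\lambda$ and use $\Delta c = 0$ together with $\Delta u_0 = f$:
\begin{equation*}
L_\lambda(u_0 + c) = -\Delta u_0 + \lambda \mathrm{e}^{u_0 + c}\bigl(\mathrm{e}^{u_0+c} - 1\bigr) + f = \lambda \mathrm{e}^{u_0+c}\bigl(\mathrm{e}^{u_0+c}-1\bigr),
\end{equation*}
where the first and last terms cancel. Since $\mathrm{e}^{c} = \kappa_1/\lambda$, this simplifies pointwise to
\begin{equation*}
L_\lambda(u_0 + c) = \kappa_1 \mathrm{e}^{u_0}\left(\frac{\kappa_1}{\lambda}\, \mathrm{e}^{u_0} - 1\right).
\end{equation*}

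Finally I would invoke the finiteness of the graph: $u_0$ takes only finitely many values, so $M := \max_V \mathrm{e}^{u_0} < +\infty$. The prefactor $\kappa_1 \mathrm{e}^{u_0}$ is strictly positive, so the sign of $L_\lambda(u_0 + c)$ is governed by the bracket $\frac{\kappa_1}{\lambda}\mathrm{e}^{u_0} - 1$, which is bounded above by $\frac{\kappa_1}{\lambda} M - 1$. Choosing any $\kappa_1 \in \left(0, \min\{\lambda, \lambda/M\}\right)$ makes this quantity negative uniformly over all vertices, yielding $L_\lambda(u_0 + c) < 0$ on $V$, as desired.

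As for the main obstacle, there is none of real substance: the entire argument is a direct computation resting on the two elementary facts that constants are Laplacian-harmonic and that a function on a finite graph is automatically bounded. The only point requiring a moment's care is the bookkeeping of the admissible range of $\kappa_1$, ensuring simultaneously that $\kappa_1 < \lambda$ (so that the logarithm is defined and $c$ makes sense) and that $\kappa_1 < \lambda/M$ (so that the bracket is negative even at the vertex where $u_0$ attains its maximum).
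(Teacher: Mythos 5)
Your proposal is correct and follows essentially the same route as the paper: both substitute $\mathrm{e}^{\ln(\kappa_1/\lambda)}=\kappa_1/\lambda$ to rewrite the nonlinear term as $\kappa_1\mathrm{e}^{u_0}\bigl(\frac{\kappa_1}{\lambda}\mathrm{e}^{u_0}-1\bigr)$, invoke $-\Delta u_0+f=0$, and take $\kappa_1$ small enough that the bracket is negative. If anything, your version is marginally tidier on one point of rigor: you make the bracket \emph{strictly} negative via the explicit choice $\kappa_1<\lambda/\max_V\mathrm{e}^{u_0}$, whereas the paper justifies a strict inequality with only ``$\frac{\kappa_1}{\lambda}\mathrm{e}^{u_0}-1\leq 0$.''
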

	\begin{proof}
		If $\lambda>\kappa_1>0$, then
		\begin{equation*}
			\begin{aligned}
				L_\lambda\left(u_{0}+\ln \frac{\kappa_1}{\lambda}\right) & =-\Delta u_{0}+\kappa_1 \mathrm{e}^{u_{0}}\left(\frac{\kappa_1}{\lambda} \mathrm{e}^{u_{0}}-1\right)+f \\
				& <-\Delta u_{0}+f \\
				& =0,
			\end{aligned}
		\end{equation*}
		since $\frac{\kappa_1}{\lambda} \mathrm{e}^{u_{0}}-1 \leq 0$, if we take $\kappa_1$ small enough.
	\end{proof}
	
	As a consequence, we have the following lemma.
	\begin{lemma} \label{CSH-graph-lem-2.5}
		Assume that $L_{0} u_{0}=0$ on $V$ for any given function $f \in X$. If $\lambda>0$, then \eqref{CSH-graph-1.1} has a locally strict minimum solution $u_\lambda$.
	\end{lemma}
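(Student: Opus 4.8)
The plan is to reproduce the constrained-minimization scheme of Lemma \ref{CSH-graph-lem-2.3}(ii), but with the strict sub-solution now supplied by Lemma \ref{CSH-graph-lem-2.4} in place of the constant $\ln\frac{1}{2}$. Set $\underline{u} := u_{0}+\ln\frac{\kappa_1}{\lambda}$, where $\kappa_1\in(0,\lambda)$ is chosen small enough that Lemma \ref{CSH-graph-lem-2.4} applies, so that $L_\lambda(\underline u)<0$ on $V$ and $\underline u$ serves as a strict sub-solution. For the super-solution I would take a constant $A$ so large that $A>\underline u(x)$ for every $x\in V$ and $L_\lambda(A)=\lambda \mathrm{e}^A(\mathrm{e}^A-1)+f>0$ on $V$; the latter holds once $\lambda \mathrm{e}^A(\mathrm{e}^A-1)>\|f\|_X$, which is achievable because $\lambda>0$.

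First I would minimize $J_\lambda$ over the order interval $K:=\{u\in X:\underline u\le u\le A\}$. Since $X\cong\mathbb{R}^{\ell}$, $J_\lambda\in C^2(X,\mathbb{R})$, and $K$ is a bounded closed, hence compact, subset of $X$, there exists $u_\lambda\in K$ attaining $\min_K J_\lambda$. The decisive step is to show that $u_\lambda$ lies in the interior of $K$, that is $\underline u<u_\lambda<A$ pointwise; once this is established, $u_\lambda$ is a local minimum critical point of $J_\lambda$ on $X$, so $L_\lambda(u_\lambda)=0$ and $u_\lambda$ is a locally strict minimum solution to \eqref{CSH-graph-1.1}.

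For the interior claim I would test $J_\lambda$ against $\pm\delta_x$, as in Lemma \ref{CSH-graph-lem-2.3}(ii). If $u_\lambda(x_1)=A$ for some $x_1$, differentiating $J_\lambda(u_\lambda-t\delta_{x_1})$ at $t=0^{+}$ gives $0\le \Delta u_\lambda(x_1)-\lambda \mathrm{e}^A(\mathrm{e}^A-1)-f(x_1)$, while $L_\lambda(A)>0$ forces $\lambda \mathrm{e}^A(\mathrm{e}^A-1)+f(x_1)>0$, hence $\Delta u_\lambda(x_1)>0$; but $A-u_\lambda\ge 0$ attains its minimum value $0$ at $x_1$, so $\Delta u_\lambda(x_1)\le 0$ by Lemma \ref{CSH-graph-lem-2.1}, a contradiction. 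The lower boundary is the genuinely new point, since here the obstacle $\underline u$ is \emph{not} constant. If $u_\lambda(x_0)=\underline u(x_0)$, differentiating $J_\lambda(u_\lambda+t\delta_{x_0})$ yields $0\le -\Delta u_\lambda(x_0)+\lambda \mathrm{e}^{\underline u(x_0)}(\mathrm{e}^{\underline u(x_0)}-1)+f(x_0)$, and subtracting the inequality $L_\lambda(\underline u)(x_0)<0$ I obtain $\Delta(u_\lambda-\underline u)(x_0)<0$. This contradicts the behavior of the difference $w:=u_\lambda-\underline u$: since $w\ge 0$ on $V$ with $w(x_0)=0$, the point $x_0$ is a global minimum of $w$, whence $\Delta w(x_0)=\frac{1}{\mu(x_0)}\sum_{y\sim x_0}\omega_{x_0 y}\,w(y)\ge 0$.

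The main obstacle is precisely this lower-boundary analysis: in Lemma \ref{CSH-graph-lem-2.3}(ii) the sub-solution was a constant, so Lemma \ref{CSH-graph-lem-2.1} could be applied directly to $u_\lambda$, whereas here the comparison must be carried out for the difference $w=u_\lambda-\underline u$ against the non-constant strict sub-solution $\underline u$ produced by Lemma \ref{CSH-graph-lem-2.4}. Everything else—existence of the minimizer, the $C^2$ regularity, and the identification of a constrained interior minimizer as an unconstrained local minimum critical point—is routine and mirrors the proof of Lemma \ref{CSH-graph-lem-2.3}.
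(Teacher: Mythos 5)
Your proposal is correct and follows essentially the same route as the paper: minimize $J_\lambda$ over the order interval $\{u_0+\ln\frac{\kappa_1}{\lambda}\le u\le A\}$, rule out touching the upper constant barrier via $L_\lambda(A)>0$ and a maximum-point argument, and rule out touching the lower (non-constant) barrier by rewriting the first-variation inequality in terms of $-\Delta(u_\lambda-u_0)(x_0)+L_\lambda\bigl(u_0+\ln\frac{\kappa_1}{\lambda}\bigr)(x_0)$ and contradicting the minimum-point property of the difference $u_\lambda-\underline u$. Your explicit computation $\Delta w(x_0)=\frac{1}{\mu(x_0)}\sum_{y\sim x_0}\omega_{x_0y}w(y)\ge 0$ just spells out the step the paper states as ``contradicts the fact that $x_0$ is a minimum point,'' so there is no substantive difference.
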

	\begin{proof}
		Notice that $\kappa_1$ is defined in Lemma \ref{CSH-graph-lem-2.4}. Let $A>1$ be a sufficiently large constant such that $L_\lambda A>0$ and $u_{0}+\ln \frac{\kappa_1}{\lambda}<A$ on $V$. Then there exists some $u_\lambda \in X$ such that
		\begin{equation*}
			J_\lambda\left(u_\lambda\right)=\min _{u_{0}+\ln \frac{\kappa_1}{\lambda} \leq u \leq A} J_\lambda(u).
		\end{equation*}
		Suppose that there is some point $x_0 \in V$ satisfying $u_\lambda\left(x_0\right)=u_{0}\left(x_0\right)+\ln \frac{\kappa_1}{\lambda}$. Let $\varepsilon>0$ be so small that for $t \in(0, \varepsilon)$, there holds
		\begin{equation*}
			u_{0}(x)+\ln \frac{\kappa_1}{\lambda} \leq u_\lambda(x)+t \delta_{x_0}(x) \leq A,\ \text {for all} \ x \in V.
		\end{equation*}
		Similar to what we do in the proof of Lemma \ref{CSH-graph-lem-2.3} and according to Lemma \ref{CSH-graph-lem-2.4}, we have
		\begin{equation*}
			\begin{aligned}
				0 & \leq\left.\frac{\mathrm{d}}{\mathrm{d} t}\right|_{t=0} J_\lambda\left(u_\lambda+t \delta_{x_0}\right) \\
				& =-\Delta u_\lambda\left(x_0\right)+\lambda \mathrm{e}^{u_\lambda\left(x_0\right)}\left(\mathrm{e}^{u_\lambda\left(x_0\right)}-1\right)+f\left(x_0\right) \\
				& =-\Delta\left(u_\lambda-u_{0}\right)\left(x_0\right)+L_\lambda\Bigl(u_{0}+\ln \frac{\kappa_1}{\lambda}\Bigr)\left(x_0\right) \\
				& <-\Delta\left(u_\lambda-u_{0}\right)\left(x_0\right).
			\end{aligned}
		\end{equation*}
		This contradicts the fact that $x_0$ is a minimum point of $u_\lambda-u_{0}-\ln \frac{\kappa_1}{\lambda}$. Hence
		\begin{equation*}
			u_\lambda(x)>u_{0}(x)+\ln \frac{\kappa_1}{\lambda}, \ \text {for all} \ x \in V.
		\end{equation*}
		In the same way, we find that $u_\lambda(x)<A$ for all $x \in V$. In fact,
		suppose that there exists a point $x_1 \in V$ such that $u_\lambda\left(x_1\right)=A$. Letting $\varepsilon>0$ be sufficiently small so that for $t \in(0, \varepsilon)$, it follows that
		\begin{equation} \label{CSH-graph-z111}
			u_{0}(x)+\ln \frac{\kappa_1}{\lambda} \leq u_\lambda(x)-t \delta_{x_1}(x) \leq A,\ \text {for all} \ x \in V.
		\end{equation}
		Similar to what we do in the proof of Lemma \ref{CSH-graph-lem-2.3}, we have
		\begin{equation*}
			\begin{aligned}
				0 & \leq\left.\frac{\mathrm{d}}{\mathrm{d} t}\right|_{t=0} J_\lambda\left(u_\lambda-t \delta_{x_1}\right) \\
				& =\Delta u_\lambda\left(x_1\right)-\lambda \mathrm{e}^{u_\lambda\left(x_1\right)}\left(\mathrm{e}^{u_\lambda\left(x_1\right)}-1\right)-f\left(x_1\right) \\
				& =\Delta u_\lambda\left(x_1\right)-L_\lambda\left(A\right) \\
				& <\Delta u_\lambda\left(x_1\right).
			\end{aligned}
		\end{equation*}
		This contradicts the fact that $x_1$ is a maximum point of $u_\lambda$. Hence
		\begin{equation} \label{CSH-graph-z112}
			u_\lambda(x)<A, \ \text {for all} \ x \in V.
		\end{equation}
		Therefore, $u_\lambda$ is a locally strict minimum critical point of $J_\lambda$. Thus we complete the proof of the lemma.
	\end{proof}

	We conclude from Lemmas \ref{CSH-graph-lem-2.3} and \ref{CSH-graph-lem-2.5} that the following two critical numbers are well defined. For any given $f \in X$, we define
	\begin{equation} \label{CSH-graph-2.14}
		\Lambda_*^1=\Lambda_*^1(f):=\sup \left\{\lambda<0: J_\lambda \text { has a locally strict maximum critical point}\right\}
	\end{equation}
	and
	\begin{equation} \label{CSH-graph-2.15}
		\Lambda^*=\Lambda^*(f):=\inf \left\{\lambda>0: J_\lambda \text { has a locally strict minimum critical point}\right\}.
	\end{equation}
	
	\begin{lemma} \label{CSH-graph-lem-2.8}
		For any given function $f \in X$, we have the following:
		\begin{enumerate}
			\item[(i)] if $\overline{f}>0$, then $\Lambda^*\geq 4\overline{f}$; if $\overline{f}<0$, then $\Lambda_* \leq 4\overline{f}$;
			\item[(ii)] if $\overline{f}\geq 0$, then $\Lambda_*^1 \leq0$; if $\overline{f}\leq 0$, then $\Lambda^*\geq 0$;
			\item[(iii)] if $\overline{f}<0$, then $\Lambda_*^1 \leq \Lambda_*$.
		\end{enumerate}
	\end{lemma}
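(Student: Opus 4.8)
The unifying tool is the integration identity coming from $\int_V \Delta u\,\mathrm{d}\mu = 0$ on a finite graph. If $u$ is any critical point of $J_\lambda$, then $u$ solves \eqref{CSH-graph-1.1}, and integrating over $V$ gives
$$\lambda \int_V \mathrm{e}^u(\mathrm{e}^u-1)\,\mathrm{d}\mu = -\int_V f\,\mathrm{d}\mu = -|V|\,\overline{f}.$$
Combining this with the elementary pointwise bound $\mathrm{e}^u(\mathrm{e}^u-1) \ge -\tfrac14$ (the minimum of $t(t-1)$ at $t=\mathrm{e}^u=\tfrac12$) yields (i): when $\overline f>0$ and $\lambda>0$, the identity forces $-|V|\overline f \ge -\tfrac{\lambda}{4}|V|$, i.e. $\lambda \ge 4\overline f$, so every $\lambda>0$ admitting a critical point (in particular a locally strict minimum) satisfies $\lambda\ge 4\overline f$, whence $\Lambda^*\ge 4\overline f$; when $\overline f<0$ and $\lambda<0$ the sign of $\lambda$ reverses the inequality and gives $\lambda \le 4\overline f$ for every admissible $\lambda$, so $\Lambda_*\le 4\overline f$. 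This step is short once the identity is in hand.

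For (ii) I would argue directly from the definitions \eqref{CSH-graph-2.14} and \eqref{CSH-graph-2.15}. By Lemma \ref{CSH-graph-lem-2.3} the defining sets are nonempty (for $|\lambda|$ large a locally strict maximum, resp.\ minimum, exists), so $\Lambda_*^1$ and $\Lambda^*$ are well defined; since $\Lambda_*^1$ is a supremum over $\lambda<0$ and $\Lambda^*$ an infimum over $\lambda>0$, one automatically has $\Lambda_*^1\le 0$ and $\Lambda^*\ge 0$. The hypotheses $\overline f\ge 0$ and $\overline f\le 0$ are exactly the sign regimes not already covered by the sharper bounds in (i), so (i) and (ii) together control the sign and size of $\Lambda^*$ and $\Lambda_*^1$ in all cases.

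The substance is (iii), and this is where I expect the main difficulty. Fix $\lambda<0$ with $\overline f<0$ and suppose $J_\lambda$ has a locally strict maximum critical point $u^\ast$; the goal is to produce a local minimum critical point at the \emph{same} $\lambda$, for then the defining set of $\Lambda_*^1$ is contained in that of $\Lambda_*$ (note $\lambda\overline f>0$ holds automatically), giving $\Lambda_*^1\le\Lambda_*$. The plan is a super/sub-solution sandwich. Using the corrector $\varphi$ of \eqref{CSH-graph-38zz}, set $\underline u = \varphi + m$; a direct computation gives $L_\lambda(\varphi+m) = \overline f + \lambda \mathrm{e}^{\varphi+m}(\mathrm{e}^{\varphi+m}-1)$, and since the nonlinear term tends to $0$ uniformly as $m\to-\infty$, $\underline u$ is a strict sub-solution ($L_\lambda(\underline u)<0$) with $\underline u\le u^\ast$ once $m$ is sufficiently negative, while $u^\ast$ is an exact solution, hence a super-solution. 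I would then minimize $J_\lambda$ over the compact order box $M=\{u:\underline u\le u\le u^\ast\}\subset X\cong\mathbb R^\ell$, obtaining a minimizer $u_0$. The one-sided variational inequalities at the boundary of $M$, exactly as in the proofs of Lemmas \ref{CSH-graph-lem-2.3} and \ref{CSH-graph-lem-2.5}, show $u_0>\underline u$ everywhere (the strict sub-solution repels the lower face) and force $L_\lambda(u_0)=0$ at any vertex where $u_0$ touches $u^\ast$, so $u_0$ is in fact an exact solution of \eqref{CSH-graph-1.1}.

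The delicate point — the one I expect to be the real obstacle — is ruling out contact of $u_0$ with the \emph{upper} face $u^\ast$, since $u^\ast$ is only a non-strict super-solution. Here I would combine two observations. First, because $u^\ast$ is a \emph{strict} local maximum, points of $M$ arbitrarily close to $u^\ast$ (e.g.\ $u^\ast - t\delta_x$) have strictly smaller energy, so $u_0\neq u^\ast$. Second, setting $w=u^\ast-u_0\ge0$ and subtracting the two equations $L_\lambda(u^\ast)=L_\lambda(u_0)=0$, at any vertex $x_1$ with $w(x_1)=0$ the nonlinear terms cancel and one obtains $\Delta w(x_1)=0$; since $w\ge 0$ this forces $w\equiv0$ on the neighbours of $x_1$, and connectedness of $G$ (via Lemma \ref{CSH-graph-lem-2.1}) propagates $w\equiv 0$, contradicting $u_0\neq u^\ast$. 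Hence $u_0<u^\ast$ everywhere, so $u_0$ lies in the interior of $M$ and is a genuine local minimum critical point of $J_\lambda$. This establishes the inclusion of the defining sets and therefore $\Lambda_*^1\le\Lambda_*$.
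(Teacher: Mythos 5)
Your proposal is correct, and on the substantive part (iii) it takes a genuinely different route from the paper. For (i) the paper simply cites \cite[Lemma 13]{LSY}, whose content is exactly your argument (integrate \eqref{CSH-graph-1.1} over $V$ and use the pointwise bound $\mathrm{e}^u(\mathrm{e}^u-1)\ge -\tfrac14$), and for (ii) the paper, like you, reads the sign constraints directly off the definitions \eqref{CSH-graph-2.14} and \eqref{CSH-graph-2.15}; so those parts coincide up to being self-contained versus cited. The divergence is in (iii): the paper argues by contradiction at the level of the critical numbers --- if $\Lambda_*^1>\Lambda_*$, pick $\lambda_1\in(\Lambda_*,\Lambda_*^1)$ in the defining set of $\Lambda_*^1$, so \eqref{CSH-graph-1.1} has a solution at $\lambda_1$; then the monotonicity lemma \cite[Lemma 12]{LSY} (a solution at $\lambda_1$ produces local minimum solutions for every $\lambda\in[\Lambda_*,\lambda_1)$) puts points of the defining set of $\Lambda_*$ strictly above $\Lambda_*$, contradicting \eqref{CSH-graph-2.13}. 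You instead fix a single $\lambda$ in the defining set of $\Lambda_*^1$ and manufacture a local minimum critical point of $J_\lambda$ at that \emph{same} $\lambda$ by a sub/super-solution sandwich: $\varphi+m$ with $m\ll 0$ is a strict sub-solution (this is where $\overline f<0$ enters), the strict maximum solution $u^*$ is the super-solution, and minimization over the order box works because the energy comparison ($u^*$ is a locally strict maximum, so $u^*-t\delta_x\in M$ has smaller energy) rules out $u_0=u^*$, while the identity $L_\lambda(u_0)(x_1)=\Delta(u^*-u_0)(x_1)$ at an upper contact point, combined with $u^*-u_0\ge 0$ and connectedness of the graph, propagates any contact to $u_0\equiv u^*$; hence $u_0$ is interior and is a genuine local minimum critical point, giving containment of the defining sets and so $\Lambda_*^1\le\Lambda_*$. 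Your boundary analysis (lower-face exclusion via the strict sub-solution, the cancellation of nonlinear terms at contact vertices, the propagation via Lemma \ref{CSH-graph-lem-2.1}-type reasoning) is sound. The trade-off: the paper's proof is three lines given the imported structure lemma from \cite{LSY}, while yours is longer but self-contained and proves something slightly stronger --- coexistence of a strict maximum and a local minimum at the same $\lambda$ --- which is in the spirit of what Theorem \ref{CSH-graph-theo-1.1}(b) exploits.
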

	\begin{proof}
		(i) The proof is derived from \cite[Lemma 13]{LSY};
		
		(ii) By \eqref{CSH-graph-2.14} and \eqref{CSH-graph-2.15}, the conclusion immediately holds;
		
		(iii) If not, suppose that $\Lambda_*^1 > \Lambda_*$ and there exists a number $\lambda_1 \in\left(\Lambda_*,\Lambda_*^1\right)$ such that \eqref{CSH-graph-1.1} has a solution at $\lambda=\lambda_1$. According to \cite[Lemma 12]{LSY}, \eqref{CSH-graph-1.1} would then have a local minimum solution for any $\lambda \in\left[\Lambda_*, \lambda_1\right)$. This contradicts the definition of $\Lambda_*$. Therefore, it must hold $\Lambda_*^1 \leq \Lambda_*$.
	\end{proof}
	
	We have the following elliptic estimate.
	\begin{lemma}\label{CSH-graph-lem-2.9}
		Let $(V, E)$ be a connected finite graph with symmetric weights. Then there exists a constant $C>0$ depending only on the graph $V$, such that for any function $u \in X$, it holds that
		\begin{equation*} \label{CSH-graph-18}
			\max _V {u}-\min _V u \leq C\|\Delta u\|_X.
		\end{equation*}
	\end{lemma}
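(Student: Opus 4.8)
The plan is to exploit that $X\cong\mathbb{R}^{\ell}$ is finite-dimensional, so that the estimate reduces to comparing two norms on a quotient space. First I would record the two quantities as seminorms on $X$: the oscillation $\operatorname{osc}(u):=\max_V u-\min_V u$ and the map $u\mapsto\|\Delta u\|_X$. Both are plainly subadditive and absolutely homogeneous, so the only point to check is where they vanish. Clearly $\operatorname{osc}(u)=0$ iff $u$ is constant. For the second, if $\Delta u\equiv0$ then, evaluating the identity $\mu(x_0)\Delta u(x_0)=\sum_{y\sim x_0}\omega_{x_0y}(u(y)-u(x_0))$ at a maximum point $x_0$ of $u$, every summand is $\le 0$ while the total vanishes, forcing $u(y)=u(x_0)$ for all $y\sim x_0$; since $G$ is connected this propagates to all of $V$, so $u$ is constant. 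Hence both seminorms vanish exactly on the line $\mathbb{R}\mathbf{1}$ of constant functions.

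Next I would pass to the quotient $Y:=X/\mathbb{R}\mathbf{1}$, a finite-dimensional real vector space (of dimension $\ell-1$). By the previous paragraph both seminorms descend to genuine norms on $Y$. Since all norms on a finite-dimensional space are equivalent, there is a constant $C>0$ — depending only on $\Delta$ and the norm structure of $X$, that is, only on $V$, $E$, the weights $\omega_{xy}$ and the measure $\mu$, hence only on the graph — such that $\operatorname{osc}(u)\le C\|\Delta u\|_X$ for every $u$. This is exactly the claim, since both sides are unchanged when a constant is added to $u$. The only genuine obstacle in this route is the verification that $u\mapsto\|\Delta u\|_X$ vanishes only on constants; this is where connectivity of $G$ is essential (the inequality fails on a disconnected graph) and is the same maximum-principle reasoning as in Lemma \ref{CSH-graph-lem-2.1}.

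If one prefers an explicit constant rather than the abstract equivalence, I would argue combinatorially. Order the vertices so that $u(v_1)\ge\cdots\ge u(v_\ell)$ and set $S_k=\{v_1,\dots,v_k\}$. Summing $\mu(x)\Delta u(x)=\sum_{y\sim x}\omega_{xy}(u(y)-u(x))$ over $x\in S_k$, the interior edges cancel and one obtains the flux identity $\bigl|\sum_{x\in S_k}\mu(x)\Delta u(x)\bigr|=\sum_{x\in S_k,\,y\notin S_k}\omega_{xy}\bigl(u(x)-u(y)\bigr)$, whose left-hand side is at most $|V|\,\|\Delta u\|_X$. Summing these cut estimates over $k=1,\dots,\ell-1$ and counting, for a fixed edge, how many cuts it crosses, yields $\sum_{xy\in E}\omega_{xy}|u(x)-u(y)|\le(\ell-1)|V|\,\|\Delta u\|_X$. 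Chaining along a path in a spanning tree from the maximum vertex to the minimum vertex then gives $\max_V u-\min_V u\le\bigl((\ell-1)|V|/\omega_{\min}\bigr)\|\Delta u\|_X$, where $\omega_{\min}=\min_{xy\in E}\omega_{xy}>0$, an admissible graph-dependent constant.

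Either way, the heart of the matter is the same single fact — that a graph-harmonic function on a connected finite graph is constant — and the rest is soft finite-dimensional or bookkeeping.
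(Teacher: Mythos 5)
Your proposal is correct, and both of your routes are genuinely different from the paper's. The paper proves the lemma by an $L^2$ energy method: it chains the oscillation along a shortest path joining a maximum point to a minimum point and applies Cauchy--Schwarz to get $\max_V u-\min_V u\le\sqrt{(\ell-1)/w_0}\,\bigl(\int_V|\nabla u|^2\,\mathrm{d}\mu\bigr)^{1/2}$, then controls the Dirichlet energy via integration by parts and the Poincar\'e inequality $\int_V|\nabla u|^2\,\mathrm{d}\mu\le\lambda_1^{-1}\int_V(\Delta u)^2\,\mathrm{d}\mu$, where $\lambda_1$ is the spectral gap; this yields the explicit constant $C=\sqrt{(\ell-1)|V|/(w_0\lambda_1)}$, which the paper reuses verbatim later (in Lemma \ref{CSH-graph-lem-2.10} and in the proof of Theorem \ref{CSH-graph-theo-1.3}). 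Your quotient-space argument replaces all of this by the single fact that a graph-harmonic function on a connected finite graph is constant (the same maximum-principle reasoning as in Lemma \ref{CSH-graph-lem-2.1}), after which equivalence of norms on the finite-dimensional quotient $X/\mathbb{R}\mathbf{1}$ finishes the proof; this is shorter and isolates exactly where connectivity enters, but it produces no explicit constant. Your combinatorial flux argument is also sound: summing the divergence identity over the superlevel sets $S_k$ cancels interior edges by symmetry of the weights, the monotone ordering forces all cut terms to have one sign, and summing over the $\ell-1$ cuts and chaining along a spanning-tree path gives the explicit bound $C=(\ell-1)|V|/\omega_{\min}$; compared with the paper this avoids the spectral gap $\lambda_1$ and any $L^2$ duality, using only elementary combinatorial data, at the cost of slightly more bookkeeping. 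All three proofs hinge on connectivity: the paper uses it through the existence of the path and the positivity of $\lambda_1$, while your two arguments make its role explicit through the maximum principle and the spanning tree, respectively.
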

	\begin{proof}
		Assume that $V=\left\{x_1, \cdots, x_{\ell}\right\}$ such that $u\left(x_1\right)=\max _V u$ and $u\left(x_{\ell}\right)=\min _V u$. Without loss of generality, let $x_1 x_2, x_2 x_3, \cdots, x_{\ell-1} x_{\ell}$ be the shortest path connecting $x_1$ and $x_{\ell}$. It follows that
		\begin{equation} \label{CSH-graph-2.16}
			\begin{aligned}
				0 \leq u\left(x_1\right)-u\left(x_{\ell}\right) & \leq \sum_{j=1}^{\ell-1}\left|u\left(x_j\right)-u\left(x_{j+1}\right)\right| \\
				& \leq \frac{\sqrt{\ell-1}}{\sqrt{w_0}}\Bigl(\sum_{j=1}^{\ell-1} w_{x_j x_{j+1}}\left(u\left(x_j\right)-u\left(x_{j+1}\right)\right)^2\Bigr)^{\frac{1}{2}} \\
				& \leq \frac{\sqrt{\ell-1}}{\sqrt{w_0}}\Bigl(\int_V|\nabla u|^2 \mathrm{d} \mu\Bigr)^{\frac{1}{2}}.
			\end{aligned}
		\end{equation}
		Denoting $\lambda_{1}:=\inf _{\overline{v}=0, \int_V v^{2} \mathrm{d} \mu=1} \int_V|\nabla v|^{2} \mathrm{d} \mu$ and $w_{0}:= \min _{x \in V, y \sim x} \omega_{xy}$, we obtain by integration by parts
		\begin{equation*}
			\begin{aligned}
				\int_V|\nabla u|^2 \mathrm{d} \mu & =-\int_V(u-\overline{u}) \Delta u \mathrm{d} \mu \\
				& \leq\Bigl(\int_V(u-\overline{u})^2 \mathrm{d} \mu\Bigr)^{\frac{1}{2}}\Bigl(\int_V(\Delta u)^2 \mathrm{d} \mu\Bigr)^{\frac{1}{2}} \\
				& \leq\Bigl(\frac{1}{\lambda_1} \int_V|\nabla (u-\overline{u})|^2 \mathrm{d} \mu\Bigr)^{\frac{1}{2}}\Bigl(\int_V(\Delta u)^2 \mathrm{d} \mu\Bigr)^{\frac{1}{2}}\\
				& =\Bigl(\frac{1}{\lambda_1} \int_V|\nabla u|^2 \mathrm{d} \mu\Bigr)^{\frac{1}{2}}\Bigl(\int_V(\Delta u)^2 \mathrm{d} \mu\Bigr)^{\frac{1}{2}},
			\end{aligned}
		\end{equation*}
		which gives
		\begin{equation}\label{CSH-graph-17}
			\int_V|\nabla u|^2 \mathrm{d} \mu \leq \frac{1}{\lambda_1} \int_V(\Delta u)^2 \mathrm{d} \mu \leq \frac{1}{\lambda_1}\|\Delta u\|_X^2|V|.
		\end{equation}		
		Combining \eqref{CSH-graph-2.16} with \eqref{CSH-graph-17}, we conclude that
		\begin{equation*}
			\max _V {u}-\min _V u \leq \sqrt{\frac{(\ell-1)|V|}{w_0 \lambda_1}}\|\Delta u\|_X.
		\end{equation*}
		Choosing $C:=\sqrt{\frac{(\ell-1)|V|}{w_0 \lambda_1}}>0$, thus we complete the proof of the Lemma \ref{CSH-graph-lem-2.9}.
	\end{proof}
	
	Let $J_\lambda : X \rightarrow \mathbb{R}$ be a functional defined as in \eqref{CSH-graph-z7}. Note that the critical point of $J_\lambda $ is a solution to the Chern-Simons equation \eqref{CSH-graph-1.1}. The following property of $J_\lambda $ will be not only useful for our subsequent analysis, but also of its independent interest.
	\begin{lemma}\label{CSH-graph-lem-2.10}
		Under the assumption that $\lambda \overline{f} \neq 0$, the functional $J_\lambda$ satisfies the Palais-Smale condition at any level $c \in \mathbb{R}$.
	\end{lemma}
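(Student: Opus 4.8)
The plan is to exploit the fact that $X \cong \mathbb{R}^{\ell}$ is finite-dimensional: once I show that every Palais--Smale sequence is bounded in $X$, the Bolzano--Weierstrass theorem yields a convergent subsequence, and the continuity of $J_\lambda$ and its derivative automatically places the limit at level $c$ as a critical point. Thus the entire problem reduces to an a priori $L^\infty$-bound for a sequence $(u_n)$ with $J_\lambda(u_n)\to c$ and $\|J_\lambda'(u_n)\|_{X^*}\to 0$; in fact only the gradient condition will be needed.

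First I would convert the gradient condition into pointwise information. Writing $h_n:=-\Delta u_n+\lambda \mathrm{e}^{u_n}(\mathrm{e}^{u_n}-1)+f$, the identity $\langle J_\lambda'(u_n),\phi\rangle=\int_V h_n\phi\,\mathrm{d}\mu$ together with indicator test functions at each vertex shows $\|h_n\|_X\to 0$. Testing against the constant function $1$ and using $\int_V\Delta u_n\,\mathrm{d}\mu=0$ (which follows from the symmetry of the weights) gives
$$\lambda\int_V\mathrm{e}^{u_n}(\mathrm{e}^{u_n}-1)\,\mathrm{d}\mu+\overline{f}|V|\to 0,$$
so $\int_V\mathrm{e}^{u_n}(\mathrm{e}^{u_n}-1)\,\mathrm{d}\mu$ converges to $-\overline{f}|V|/\lambda$, a finite and \emph{nonzero} number precisely because $\lambda\overline{f}\neq 0$.

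Next comes the upper bound. Since $t^2-t\geq-\tfrac14$ for $t=\mathrm{e}^{u_n(x)}>0$, each summand of $\int_V\mathrm{e}^{u_n}(\mathrm{e}^{u_n}-1)\,\mathrm{d}\mu=\sum_{x}\mu(x)\bigl(\mathrm{e}^{2u_n(x)}-\mathrm{e}^{u_n(x)}\bigr)$ is bounded below; as the total sum is bounded, every individual summand is bounded above, forcing $\mathrm{e}^{u_n(x)}$, hence $u_n(x)$, to be uniformly bounded above by some constant $C_1$. This upper bound makes $\|\lambda\mathrm{e}^{u_n}(\mathrm{e}^{u_n}-1)\|_X$ bounded, so from $\Delta u_n=\lambda\mathrm{e}^{u_n}(\mathrm{e}^{u_n}-1)+f-h_n$ we obtain that $\|\Delta u_n\|_X$ is bounded, and the elliptic estimate of Lemma \ref{CSH-graph-lem-2.9} then bounds the oscillation $\max_V u_n-\min_V u_n$.

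Finally, the lower bound, which is where the hypothesis $\lambda\overline{f}\neq 0$ does its essential work and which I expect to be the main obstacle. If $\min_V u_n$ were unbounded below, then along a subsequence, together with the bounded oscillation, $u_n\to-\infty$ uniformly, whence $\int_V\mathrm{e}^{u_n}(\mathrm{e}^{u_n}-1)\,\mathrm{d}\mu\to 0$, contradicting convergence to the nonzero limit $-\overline{f}|V|/\lambda$. Hence $(u_n)$ is bounded below as well, and combined with the upper bound this gives $\|u_n\|_X$ bounded, completing the reduction. The delicate point is exactly that, unlike the degenerate case $\lambda\overline{f}=0$ (cf. Remark \ref{CSH-graph-remark-1.1}, where solutions may escape to infinity), the nonvanishing integral constraint rigidly forbids the sequence from running off to $-\infty$.
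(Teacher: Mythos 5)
Your proof is correct and follows essentially the same route as the paper: integrate the equation against constants to pin $\int_V \mathrm{e}^{u_n}(\mathrm{e}^{u_n}-1)\,\mathrm{d}\mu$ near the nonzero value $-\overline{f}|V|/\lambda$, deduce a uniform upper bound, use the elliptic estimate of Lemma \ref{CSH-graph-lem-2.9} to control the oscillation, rule out $u_n\to-\infty$ by the nonvanishing integral, and conclude by finite-dimensional compactness. The only cosmetic differences are that the paper absorbs the error into a perturbed datum $f_k=f-o_k(1)$ and obtains the upper bound by splitting the integral over $\{u_k<0\}$ and $\{u_k\ge 0\}$, whereas you keep the error term explicit and use the pointwise bound $t^2-t\ge -\tfrac14$.
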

	\begin{proof}
		Let $c \in \mathbb{R}$ and $\left\{u_k\right\}$ be a sequence in $X$ such that
		\begin{equation*}
			\begin{cases}
				J_\lambda\left(u_k\right) \rightarrow c, \\
				J_\lambda^{\prime}\left(u_k\right) \rightarrow 0 \ \text {in} \  X^* \cong \mathbb{R}^{\ell},
			\end{cases} \ \text{ as $k \rightarrow \infty$.}
		\end{equation*}
		It follows from the definition that
		\begin{equation*}
			-\Delta u_k+\lambda\mathrm{e}^{u_k}\left(\mathrm{e}^{u_k}-1\right)+f=o_k(1),
		\end{equation*}
		where $o_k(1) \rightarrow 0$ uniformly on $V$ as $k \rightarrow \infty$. We define $f_k:=f-o_k(1)$, and for sufficiently large $k$, we also have $\overline{f_k}\neq0$. Moreover, we deduce that
		\begin{equation} \label{CSH-graph-200}
			-\Delta u_k+\lambda\mathrm{e}^{u_k}\left(\mathrm{e}^{u_k}-1\right)+f_k=0.
		\end{equation}
		
		If $u_k$ is a solution to \eqref{CSH-graph-200}, then by integrate by parts, we have
		\begin{equation}\label{CSH-graph-13}
			0=\int_V \Delta u_k \mathrm{d} \mu=\lambda \int_V \mathrm{e}^{u_k}\left(\mathrm{e}^{u_k}-1\right) \mathrm{d} \mu+\int_V f_k \mathrm{d} \mu.
		\end{equation}
		
		Firstly, we show that $\left\{u_k\right\}$ has a uniform upper bound. If $\max _V {u_k}\leq0$, then it is trivial since ${u_k}$ already has an upper bound of $0$.
		If $\max _V {u_k}>0$, observing that
		\begin{equation*}
			\left|\int_{{u_k}<0} \mathrm{e}^{u_k}\left(\mathrm{e}^{u_k}-1\right) \mathrm{d} \mu\right|=\int_{{u_k}<0} \mathrm{e}^{u_k}\left(1-\mathrm{e}^{u_k}\right) \mathrm{d} \mu \leq |V|,
		\end{equation*}
		then we derive from \eqref{CSH-graph-13} that
		\begin{equation*}
			\int_{u_k \geq 0} \mathrm{e}^{u_k}\left(\mathrm{e}^{u_k}-1\right) \mathrm{d} \mu \leq |V|+\frac{1}{|\lambda|}\left|\int_V f_k \mathrm{d} \mu\right| := a_1.
		\end{equation*}
		This together with the fact that
		\begin{equation*}
			\begin{aligned}
				\int_{u_k \geq 0} \mathrm{e}^{u_k}\left(\mathrm{e}^{u_k}-1\right) \mathrm{d} \mu&=\sum_{x \in V, {u_k}(x) \geq 0} \mu(x) \mathrm{e}^{{u_k}(x)}\left(\mathrm{e}^{{u_k}(x)}-1\right)\\
				& \geq \mu_0 \mathrm{e}^{\max _V {u_k}}\left(\mathrm{e}^{\max _V {u_k}}-1\right)
			\end{aligned}
		\end{equation*}
		leads to
		\begin{equation} \label{CSH-graph-14}
			\max _V u_k \leq \ln \frac{1+\sqrt{1+4 a_1 / \mu_0}}{2},
		\end{equation}
		where $\mu_0=\min _{x \in V} \mu(x)>0$, since $V$ is finite.
		In both cases, we have
		\begin{equation*}
			\max _V u_k \leq \max\bigg\{0,\ln \frac{1+\sqrt{1+4 a_1 / \mu_0}}{2}\bigg\}=\ln \frac{1+\sqrt{1+4 a_1 / \mu_0}}{2}.
		\end{equation*}
		
		Secondly, we prove that $\left\{u_k\right\}$ has also a uniform lower bound. To see this, in view of \eqref{CSH-graph-200} and \eqref{CSH-graph-14}, we calculate for any $x \in V$,
		\begin{equation*}
			\begin{aligned}
				|\Delta {u_k}(x)| & \leq|\lambda|\left|\mathrm{e}^{{u_k}(x)}\left(\mathrm{e}^{{u_k}(x)}-1\right)\right|+|f_k(x)| \\
				& \leq|\lambda|\left(\mathrm{e}^{2 {u_k}(x)}+\mathrm{e}^{{u_k}(x)}\right)+|f_k(x)| \\
				& \leq |\lambda|\bigg(\frac{\big(1+\sqrt{1+4 a_1 / \mu_0}\big)^2}{4}+\frac{1+\sqrt{1+4 a_1 / \mu_0}}{2}\bigg)+\|f_k\|_X := b_{1}.
			\end{aligned}
		\end{equation*}
		Hence, there holds
		\begin{equation} \label{CSH-graph-15}
			\|\Delta {u_k}\|_X \leq b_{1}.
		\end{equation}
		In view of \eqref{CSH-graph-15} and Lemma \ref{CSH-graph-lem-2.9}, we have
		\begin{equation} \label{CSH-graph-19}
			\max _V {u_k}-\min _V u_k \leq c_0:=b_1 \sqrt{\frac{(\ell-1)|V|}{w_0 \lambda_1}}.
		\end{equation}
		Coming back to \eqref{CSH-graph-13}, we observe that
		\begin{equation} \label{CSH-graph-20}
			\int_V \mathrm{e}^{u_k}\left(\mathrm{e}^{u_k}-1\right) \mathrm{d} \mu=-\frac{1}{\lambda} \int_V f_k \mathrm{d} \mu := c_1\neq 0.
		\end{equation}
		Now we claim that
		\begin{equation} \label{CSH-graph-21}
			\max _V {u_k}>-A_1:=\ln \min \left\{1, \frac{\left|c_1\right|}{4|V|}\right\}.
		\end{equation}
		Otherwise, $\max _V u_k \leq-A_1$, which together with \eqref{CSH-graph-20} implies
		\begin{equation*}
			\begin{aligned}
				\left|c_1\right| & =\left|\int_V \mathrm{e}^{u_k}\left(\mathrm{e}^{u_k}-1\right) \mathrm{d} \mu\right| \leq \int_V\left(\mathrm{e}^{2 {u_k}}+\mathrm{e}^{u_k}\right) \mathrm{d} \mu \\
				& \leq\left(\mathrm{e}^{2 \max _V {u_k}}+\mathrm{e}^{\max _V {u_k}}\right)|V| \\
				& \leq 2 \mathrm{e}^{-A_1}|V| \leq \frac{\left|c_1\right|}{2}.
			\end{aligned}
		\end{equation*}
		This contradicts $c_1 \neq 0$. Thus our claim \eqref{CSH-graph-21} is true. Inserting \eqref{CSH-graph-21} into \eqref{CSH-graph-19} and combining it with \eqref{CSH-graph-14}, we obtain
		\begin{equation*}
			-A_1-c_0 \leq \min _V u_k \leq \max _V u_k \leq \ln \frac{1+\sqrt{1+4 a_1 / \mu_0}}{2}.
		\end{equation*}
		Thus we obtain $\left\{u_k\right\}$ is uniformly bounded. Since $X$ is pre-compact, up to a subsequence, $\left\{u_k\right\}$ converges uniformly to some $u^* \in X$, which is a critical point $J_\lambda$. Thus the Palais-Smale condition follows.
	\end{proof}

	\section{Proofs of theorems \ref{CSH-graph-theo-1.1} and  \ref{CSH-graph-theo-1.1(1)}}
	In this section, we aim to prove Theorems \ref{CSH-graph-theo-1.1} and \ref{CSH-graph-theo-1.1(1)} by applying the above lemmas. To prove Theorem \ref{CSH-graph-theo-1.1}, first we state the result of topological degree in \cite{LSY}, which will be used later.
	\begin{lemma}\label{lm3.1}\cite[Theorem 2]{LSY}
		Let $(V, E)$ be a connected finite graph with symmetric weights, and $F: X \rightarrow X$ be a map defined by $F(u)=-\Delta u+\lambda \mathrm{e}^{u}\left(\mathrm{e}^{u}-1\right)+f$. Suppose that $\lambda \overline{f} \neq 0$. Then there exists a large number $R_{0}>0$ such that for all $R \geq R_{0}$,
		\begin{equation*}
			\operatorname{deg}\left(F, B_{R}, 0\right)=\begin{cases}
				1, & \text { if } \lambda>0, \overline{f}<0, \\
				0, & \text { if } \lambda \overline{f}>0, \\
				-1, & \text { if } \lambda<0, \overline{f}>0,
			\end{cases}
		\end{equation*}
		where $B_{R}=\left\{u \in X:\|u\|_{X}<R\right\}$ is a ball in $X$.
	\end{lemma}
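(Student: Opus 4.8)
The plan is to reduce the computation to a one-dimensional problem by combining an a priori estimate with a single homotopy that decouples the mean-zero directions, on which $-\Delta$ is invertible, from the one constant direction, on which the nonlinearity acts as a scalar map. The degree then factors as $(+1)\times(\text{scalar degree})$ and, in particular, is independent of $\ell$.

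First I would record the a priori estimate. Arguing exactly as in the proof of Lemma~\ref{CSH-graph-lem-2.10}, every solution of $F(u)=0$ satisfies $\|u\|_X\le M$ for a constant $M$ depending only on $\lambda$, $\overline{f}$, $\|f\|_X$ and the graph. Integrating $F(u)=0$ and using $\int_V\Delta u\,\mathrm{d}\mu=0$ gives $\lambda\int_V\mathrm{e}^u(\mathrm{e}^u-1)\,\mathrm{d}\mu=-\int_V f\,\mathrm{d}\mu$; the hypothesis $\lambda\overline{f}\neq0$ makes the right-hand side nonzero, which is precisely what forbids $\max_V u\to-\infty$ and supplies the lower bound, while the upper bound and the oscillation bound $\max_V u-\min_V u\le C\|\Delta u\|_X$ come from Lemmas~\ref{CSH-graph-lem-2.1} and~\ref{CSH-graph-lem-2.9}. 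Hence $F$ has no zero on $\partial B_R$ for $R\ge R_0:=M+1$, so $\deg(F,B_R,0)$ is defined and constant for all such $R$.

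Next I would introduce the homotopy
\[
H(u,t)=-\Delta u+\lambda\mathrm{e}^{v_t}\bigl(\mathrm{e}^{v_t}-1\bigr)+f,\qquad v_t:=\overline{u}+t(u-\overline{u}),\quad t\in[0,1],
\]
which satisfies $H(\cdot,1)=F$ and freezes the nonlinearity at its mean when $t=0$. Since $\overline{v_t}=\overline{u}$ and $\int_V\Delta u\,\mathrm{d}\mu=0$, integrating $H(u,t)=0$ yields the same constraint $\int_V\mathrm{e}^{v_t}(\mathrm{e}^{v_t}-1)\,\mathrm{d}\mu=-|V|\overline{f}/\lambda\neq0$. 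This pins $\max_V v_t$ into a fixed interval $[-A_1,M_1]$ (upper bound from the bounded positive part of the integral, lower bound from the nonvanishing constraint, both as in Lemma~\ref{CSH-graph-lem-2.10}), which bounds $\|\Delta u\|_X$, hence by Lemma~\ref{CSH-graph-lem-2.9} the oscillation $\max_V u-\min_V u$, and finally $\overline{u}=\overline{v_t}$ through the relation $\max_V v_t=\overline{u}+t\max_V(u-\overline{u})$. Thus all solutions of $H(\cdot,t)=0$ lie in a fixed ball uniformly in $t$, and homotopy invariance gives $\deg(F,B_R,0)=\deg(H(\cdot,0),B_R,0)$.

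Finally I would compute the degree at $t=0$. Writing $u=\overline{u}\,\mathbf{1}+w$ with $w\in X_0:=\{w:\int_V w\,\mathrm{d}\mu=0\}$ and $\mathbf{1}$ the constant function, one has $H(u,0)=-\Delta w+(f-\overline{f})+g(\overline{u})\,\mathbf{1}$ with $g(s):=\lambda\mathrm{e}^s(\mathrm{e}^s-1)+\overline{f}$. Since $-\Delta w+(f-\overline{f})\in X_0$ and $g(\overline{u})\,\mathbf{1}\in\mathbb{R}\mathbf{1}$, the map $H(\cdot,0)$ is a product with respect to $X=\mathbb{R}\mathbf{1}\oplus X_0$, so by excision and the product formula its degree equals the product of $\deg(g,(-R_1,R_1),0)$ for large $R_1$ with the $X_0$-degree of the affine map $w\mapsto-\Delta w+(f-\overline{f})$. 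The latter is $\operatorname{sign}\det(-\Delta|_{X_0})=+1$, because $-\Delta|_{X_0}$ is symmetric positive definite with respect to the $\mu$-weighted inner product; the former is $\tfrac12\bigl(\operatorname{sign}g(+\infty)-\operatorname{sign}g(-\infty)\bigr)$, and since $g(+\infty)$ has the sign of $\lambda$ while $g(-\infty)=\overline{f}$, it equals $1$ when $\lambda>0,\overline{f}<0$, equals $0$ when $\lambda\overline{f}>0$, and equals $-1$ when $\lambda<0,\overline{f}>0$. Multiplying by $+1$ yields the three stated values. I expect the main obstacle to be the uniform a priori bound along the homotopy — specifically recovering control of $\overline{u}$ once the oscillation is controlled — and the whole argument rests on $\lambda\overline{f}\neq0$, which keeps the integral constraint nonvoid and thereby prevents $v_t$ from escaping to $-\infty$.
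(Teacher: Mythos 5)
Your proof is correct, but be aware that this paper never proves Lemma \ref{lm3.1} at all: it is imported verbatim from \cite[Theorem 2]{LSY}, so the only ``paper'' strategy to compare against is the one visible in the companion degree computation for the system (Theorem \ref{CSH-graph-theo-1.4}) and in \cite{LSY} itself. That strategy is genuinely different from yours: there one deforms the nonlinearity through a parameter $\sigma\in[0,1]$, replacing $\mathrm{e}^u(\mathrm{e}^u-1)$ by $\mathrm{e}^u(\mathrm{e}^u-\sigma)$ with a priori estimates uniform in $\sigma$; at $\sigma=0$ the equation becomes the Kazdan--Warner-type equation $\Delta u=\lambda\mathrm{e}^{2u}+f$, which has no solution whatsoever when $\lambda\overline{f}>0$ (integrate both sides), yielding the degree-$0$ case almost for free, while the values $\pm1$ require a separate analysis of that limiting equation (strict convexity and a unique nondegenerate critical point when $\lambda>0$, $\overline{f}<0$; the negative-case Kazdan--Warner degree, as in \cite{SW,LY}, when $\lambda<0$, $\overline{f}>0$). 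Your route instead keeps the nonlinearity intact and freezes its argument at the mean, $v_t=\overline{u}+t(u-\overline{u})$: the integral identity $\int_V\mathrm{e}^{v_t}(\mathrm{e}^{v_t}-1)\,\mathrm{d}\mu=-|V|\overline{f}/\lambda\neq0$ pins $\max_V v_t$ from above and below by exactly the two estimates in the proof of Lemma \ref{CSH-graph-lem-2.10}, Lemma \ref{CSH-graph-lem-2.9} then controls the oscillation of $u$, and the identity $\overline{u}=\max_V v_t-t\max_V(u-\overline{u})$ recovers the mean, so the homotopy is admissible uniformly in $t$; at $t=0$ the map splits, relative to $X=\mathbb{R}\mathbf{1}\oplus X_0$, as $(c,w)\mapsto\bigl(\lambda\mathrm{e}^{c}(\mathrm{e}^{c}-1)+\overline{f},\,-\Delta w+f-\overline{f}\bigr)$, the mean-zero factor is affine with linear part $-\Delta|_{X_0}$, which is positive definite in the $\mu$-weighted inner product (degree $+1$), and the scalar factor contributes $\tfrac12\bigl(\operatorname{sgn}\lambda-\operatorname{sgn}\overline{f}\bigr)$, which reproduces all three stated values at once. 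Both arguments rest on the same a priori estimates, but yours is self-contained (no appeal to external Kazdan--Warner degree theorems), treats the three sign cases uniformly, and makes the independence of the answer from the parity and size of $\ell$ transparent --- which is precisely the point exploited in the proof of Theorem \ref{CSH-graph-theo-1.1}; the $\sigma$-deformation, by contrast, buys the degree-$0$ case with no computation but must outsource the $\pm1$ cases.
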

We are in a position to prove Theorem \ref{CSH-graph-theo-1.1}.
	\begin{proof}[\textbf{Proof of Theorem \ref{CSH-graph-theo-1.1}.}]
		(a) If $\lambda < \Lambda_*^1$, by \eqref{CSH-graph-2.14}, then we let $u_\lambda$ be a locally strict maximum critical point of $J_\lambda$. Without loss of generality, we may assume that $u_\lambda$ is the unique critical point of $J_\lambda$. Otherwise, $J_\lambda$ would already have at least two critical points, and the proof would be complete. According to \cite[Chapter 1, page 32]{CKC}, the $r$-th critical group of $J_\lambda$ at $u_\lambda$ is defined by
		\begin{equation} \label{CSH-graph-44}
			C_r\left(J_\lambda, u_\lambda\right)=H_r\left(J_\lambda^c \cap U,\left\{J_\lambda^c \backslash\left\{u_\lambda\right\}\right\} \cap U, \mathbb{Z}\right),
		\end{equation}
		where $J_\lambda\left(u_\lambda\right)=c, J_\lambda^c=\left\{u \in X: J_\lambda(u) \leq c\right\}, U$ is a neighborhood of $u_\lambda \in X$ and $H_r$ is the singular homology group with the coefficients group $\mathbb{Z}$. By the excision property of $H_r$, this definition is not dependent on the choice of $U$. Since $u_\lambda$ is a locally strict maximum critical point of $J_\lambda$, it is easy to calculate
		\begin{equation} \label{CSH-graph-45}
			C_r\left(J_\lambda, u_\lambda\right)=\delta_{r \ell} \mathbb{Z}=\begin{cases}
				\mathbb{Z}, & \ \text{if} \  r=\ell, \\
				\{0\}, & \ \text{if} \  r \neq \ell.
			\end{cases}
		\end{equation}
		Note that $J_\lambda$ satisfies the Palais-Smale condition as established in Lemma \ref{CSH-graph-lem-2.10} and
		\begin{equation*}
			D J_\lambda(u)=-\Delta u+\lambda \mathrm{e}^u\left(\mathrm{e}^u-1\right)+f=F(u),
		\end{equation*}
		where $F$  is given as in Lemma \ref{lm3.1}. According to \cite[Chapter 1, Theorem 3.2]{CKC}, in view of \eqref{CSH-graph-45}, for sufficiently large $R>1$, we have
		\begin{equation*}
			\deg\left(F, B_R, 0\right)=\deg\left(D J_\lambda, B_R, 0\right)=\sum_{r=0}^{\infty}(-1)^r \operatorname{rank} C_r\left(J_\lambda, u_\lambda\right)=1,
		\end{equation*}
		since $\ell$ is even. This contradicts $\deg\left(F, B_R, 0\right)=-1$ obtained in Lemma \ref{lm3.1}. Therefore, \eqref{CSH-graph-1.1} must has at least two distinct solutions.
		
		(b) If $\lambda < \Lambda_*^1$ and $\overline{f}<0$, by \eqref{CSH-graph-2.13}, \eqref{CSH-graph-2.14} and Lemma \ref{CSH-graph-lem-2.8}, we let $u_{1\lambda}$ and $u_{2\lambda}$ be a locally strict maximum and locally strict minimum critical point of $J_\lambda$ respectively. Without loss of generality, we may assume that $u_{1\lambda}$ and $u_{2\lambda}$ are the only two critical points of $J_\lambda$. If not, $J_\lambda$ would already at least three critical points, and the proof would be finished. According to \cite[Chapter 1, page 32]{CKC}, the $r$-th critical group of $J_\lambda$ at $u_{i\lambda}$ for $i=1,2$ is defined by
		\begin{equation} \label{CSH-graph-44zz}
			C_r\left(J_\lambda, u_{i\lambda}\right)=H_r\left(J_\lambda^{c_i} \cap U_i,\left\{J_\lambda^{c_i} \backslash\left\{u_{i\lambda}\right\}\right\} \cap U_i, \mathbb{Z}\right),
		\end{equation}
		where $J_\lambda\left(u_{i\lambda}\right)=c_i, J_\lambda^{c_i}=\left\{u \in X: J_\lambda(u) \leq c_i\right\}$, $U_i$ is a neighborhood of $u_{i\lambda} \in X$ and $H_r$ is the singular homology group with the coefficients group $\mathbb{Z}$. It is easy to calculate
		\begin{equation} \label{CSH-graph-45zz}
			C_r\left(J_\lambda, u_{1\lambda}\right)=\delta_{r \ell} \mathbb{Z}=\begin{cases}
				\mathbb{Z}, & \ \text{if} \  r=\ell, \\
				\{0\}, & \ \text{if} \  r \neq \ell
			\end{cases}
		\end{equation}
		and
		\begin{equation} \label{CSH-graph-45z}
			C_r\left(J_\lambda, u_{2\lambda}\right)=\delta_{r 0} \mathbb{Z}=\begin{cases}
				\mathbb{Z}, & \ \text{if} \  r=0, \\
				\{0\}, & \ \text{if} \  r \neq 0.
			\end{cases}
		\end{equation}
		Similar to the proof of (a) and by combining \eqref{CSH-graph-45zz} with \eqref{CSH-graph-45z}, for sufficiently large $R>1$, it holds that
		\begin{equation*}
			\begin{aligned}
				\deg\left(F, B_R, 0\right)=&\deg\left(D J_\lambda, B_R, 0\right)\\
				=&\sum_{r=0}^{\infty}(-1)^r \operatorname{rank} C_r\left(J_\lambda, u_{1\lambda}\right)+\sum_{r=0}^{\infty}(-1)^r \operatorname{rank} C_r\left(J_\lambda, u_{2\lambda}\right)\\
				=&2,
			\end{aligned}
		\end{equation*}
		since $\ell$ is even. This contradicts $\deg\left(F, B_R, 0\right)=0$ obtained in Lemma \ref{lm3.1}. Therefore, \eqref{CSH-graph-1.1} must has at least three distinct solutions for $\lambda < \Lambda_*^1$.
		
		(c) (i) Since $\lambda=0$ and \eqref{CSH-graph-1.1} can be simplified to $\Delta u=f \ \text {in} \  V$, integration by parts yields
		$$0=\int_V \Delta u \mathrm{d}\mu=\int_V f \mathrm{d}\mu.$$
		Hence, \eqref{CSH-graph-1.1} has no solution for $\overline{f}\neq 0$.
		
		(ii) Let $\varphi$ be the unique solution to the equation
		\begin{equation*}
			\begin{cases}
				\Delta \varphi=f-\overline{f}  \ \text {in} \  V, \\
				\int_V \varphi \mathrm{d} \mu=0,
			\end{cases}
		\end{equation*}
		and define $v:=u-\varphi$.  Direct computation gives that
		$\int_V |\nabla v|^2\mathrm{d}\mu =0$.
		Therefore, $v=C$ and $u=\varphi+C$ for any constant $C$.
		
		(iii) According the definition \eqref{CSH-graph-2.15}, we can conclude that \eqref{CSH-graph-1.1} has at least one solution for $\lambda > \Lambda^*$ and $\overline{f}=0$.
		
		(iv) By \eqref{CSH-graph-2.14}, \eqref{CSH-graph-1.1} has at least one solution for $\lambda<\Lambda_*^1$.
	\end{proof}
	
	Now we will prove Theorem \ref{CSH-graph-theo-1.1(1)}.
	\begin{proof}[\textbf{Proof of Theorem \ref{CSH-graph-theo-1.1(1)}.}]
		Since we can prove Theorem \ref{CSH-graph-theo-1.1(1)} only by making some minor modifications of the proof to Theorem \ref{CSH-graph-theo-1.1}, we omit it.
	\end{proof}

	\section{Generalized Chern-Simons Higgs System}
	In this section, we aim to calculate the topological degree of the map associated with the generalized Chern-Simons Higgs system given by \eqref{CSH-graph-1.3}. Subsequently, we will utilize this degree to derive partial results concerning the multiplicity of solutions to the system. Specifically, Theorems \ref{CSH-graph-theo-1.3}, \ref{CSH-graph-theo-1.4}, and \ref{CSH-graph-theo-1.5} will be demonstrated. To begin, we present an elliptic estimate.
	
	\begin{lemma}\label{CSH-graph-lem-4.1}
		Let $\varphi$ be the unique solution to	the equation
		\begin{equation*}
			\begin{cases}
				\Delta \varphi=f \ \text {in} \  V,\\
				\int_V \varphi \mathrm{d} \mu=0.
			\end{cases}
		\end{equation*}
		Then there exists a constant $\tilde{C}>0$ depending only on the graph $V$, such that
		\begin{equation*}
			\|\varphi\|_X \leq \tilde{C} \|f\|_X.
		\end{equation*}
	\end{lemma}
	\begin{proof}
		For any $x \in V$, we calculate
		\begin{equation*}
			|\Delta \varphi(x)|=|f(x)|\leq \|f\|_X.
		\end{equation*}
		Hence, it follows that $\|\Delta \varphi\|_X\leq \|f\|_X$. By Lemma \ref{CSH-graph-lem-2.9}, there exists a constant $\tilde{C}>0$ depending only on the graph $V$, such that
		$$\|\varphi\|_X \leq \tilde{C} \|f\|_X.$$
	\end{proof}
	
	We now derive a priori estimate of solutions to \eqref{CSH-graph-1.4}, which is a deformation of \eqref{CSH-graph-1.3}.
	
	\begin{proof}[\textbf{Proof of Theorem \ref{CSH-graph-theo-1.3}.}]
		Note that there exists a unique solution $\varphi$ to the equation
		\begin{equation*}
			\begin{cases}
				\Delta \varphi=f-\overline{f}  \ \text {in} \  V,\\
				\int_V \varphi \mathrm{d} \mu=0
			\end{cases}
		\end{equation*}
		and a unique solution $\psi$ to the equation
		\begin{equation*}
			\begin{cases}
				\Delta \psi=g-\overline{g} \ \text {in} \  V, \\
				\int_V \psi \mathrm{d} \mu=0.
			\end{cases}
		\end{equation*}
		By Lemma \ref{CSH-graph-lem-4.1}, there exists a constant $\tilde{C}>0$ depending only on the graph $V$, such that
		\begin{equation} \label{CSH-graph-z102}
			\|\varphi\|_X ,\|\psi\|_X \leq \tilde{C} \left(\Lambda_2+\Lambda_3\right),
		\end{equation}
		where $\Lambda_3=\frac{\Lambda_1}{|V|}$. Set $w=u-\varphi$ and $z=v-\psi$. Then we have
		\begin{equation} \label{CSH-graph-46}
			\begin{cases}
				\Delta w=2q \mathrm{e}^\psi \mathrm{e}^z\left(\mathrm{e}^{\varphi} \mathrm{e}^w-\sigma\right)^{2p}\left(\mathrm{e}^\psi \mathrm{e}^z-\sigma\right)^{2q-1}+\overline{f}  \,\,\, &\text {in} \  V, \\
				\Delta z=2p \mathrm{e}^{\varphi} \mathrm{e}^w\left(\mathrm{e}^{\varphi} \mathrm{e}^w-\sigma\right)^{2p-1}\left(\mathrm{e}^\psi \mathrm{e}^z-\sigma\right)^{2q}+\overline{g} \,\,\,& \text {in} \  V.
			\end{cases}
		\end{equation}
		Let $\sigma \in[0,1]$ and $(w, z)$ be a solution to the system \eqref{CSH-graph-46}.
		We claim that
		\begin{equation} \label{CSH-graph-47}
			z(x)\leq-\min _V \psi,\ \text {for all} \ x \in V.
		\end{equation}
		Suppose not, then there necessarily holds $\max _V z >-\min _V \psi$. Take $x_0 \in V$ satisfying $z\left(x_0\right)=\max _V z$. Since $\sigma \in[0,1], \overline{g}>0$ and $\psi\left(x_0\right)+z\left(x_0\right) > 0$, we have
		\begin{equation*}
			\begin{aligned}
				0 &\geq \Delta z\left(x_0\right)=2p \mathrm{e}^{\varphi\left(x_0\right)} \mathrm{e}^{w\left(x_0\right)}\left(\mathrm{e}^{\varphi\left(x_0\right)} \mathrm{e}^{w\left(x_0\right)}-\sigma\right)^{2p-1}\left(\mathrm{e}^{\psi\left(x_0\right)} \mathrm{e}^{z\left(x_0\right)}-\sigma\right)^{2q}+\overline{g}\\
				& \geq 2p \mathrm{e}^{\varphi\left(x_0\right)} \mathrm{e}^{w\left(x_0\right)}\left(\mathrm{e}^{\varphi\left(x_0\right)} \mathrm{e}^{w\left(x_0\right)}-1\right)^{2p-1}\left(\mathrm{e}^{\psi\left(x_0\right)} \mathrm{e}^{z\left(x_0\right)}-1\right)^{2q}+\overline{g}\\
				& \geq \overline{g}>0,
			\end{aligned}
		\end{equation*}
		which is a contradiction. Hence, our claim \eqref{CSH-graph-47} follows. Keeping in mind $\overline{f}>0$, in the same way as above, we also have
		\begin{equation} \label{CSH-graph-48}
			w(x)\leq-\min _V \varphi,\ \text {for all} \ x \in V.
		\end{equation}
		In fact, if not, then there necessarily hold $\max _V w >-\min _V \varphi$. Taking $x_1 \in V$ satisfying $w\left(x_1\right)=\max _V w$, we have
		\begin{equation*}
			\begin{aligned}
				0 &\geq \Delta w\left(x_1\right)=2q \mathrm{e}^{\psi\left(x_1\right)} \mathrm{e}^{z\left(x_1\right)}\left(\mathrm{e}^{\varphi\left(x_1\right)} \mathrm{e}^{w\left(x_1\right)}-\sigma\right)^{2p}\left(\mathrm{e}^{\psi\left(x_1\right)} \mathrm{e}^{z\left(x_1\right)}-\sigma\right)^{2q-1}+\overline{f}\\
				& \geq 2q \mathrm{e}^{\psi\left(x_1\right)} \mathrm{e}^{z\left(x_1\right)}\left(\mathrm{e}^{\varphi\left(x_1\right)} \mathrm{e}^{w\left(x_1\right)}-1\right)^{2p}\left(\mathrm{e}^{\psi\left(x_1\right)} \mathrm{e}^{z\left(x_1\right)}-1\right)^{2q-1}+\overline{f}\\
				& \geq \overline{f}>0,
			\end{aligned}
		\end{equation*}
		which is a contradiction.
		
		By substituting \eqref{CSH-graph-47} and \eqref{CSH-graph-48} into \eqref{CSH-graph-46}, we obtain
		\begin{equation*}
			\|\Delta w\|_X, \|\Delta z\|_X \leq 2\max\{p,q\} \mathrm{e}^{2\tilde{C} (\Lambda_2+\Lambda_3)} \big(\mathrm{e}^{2\tilde{C} \left(\Lambda_2+\Lambda_3\right)} +1\big)^{2p+2q-1}+\Lambda_3.
		\end{equation*}
		Thus using \eqref{CSH-graph-z102}, we have
		\begin{equation*}
			\|\Delta u\|_X, \|\Delta v\|_X \leq b:= 2\max\{p,q\} \mathrm{e}^{2\tilde{C} \left(\Lambda_2+\Lambda_3\right)} \big(\mathrm{e}^{2\tilde{C} \left(\Lambda_2+\Lambda_3\right)} +1\big)^{2p+2q-1}+\Lambda_3+\tilde{C} \left(\Lambda_2+\Lambda_3\right),
		\end{equation*}
		By Lemma \ref{CSH-graph-lem-2.9}, we immediately conclude that
		\begin{equation} \label{CSH-graph-49}
			\max _V {u}-\min _V u , \max _V {v}-\min _V v \leq c:=\sqrt{\frac{(\ell-1)|V|}{w_0 \lambda_1}}b.
		\end{equation}
		Observe that integrating both sides of the first equation in \eqref{CSH-graph-46} yields
		\begin{equation*}
			\int_V \mathrm{e}^\psi \mathrm{e}^z\left(\mathrm{e}^\varphi \mathrm{e}^w-\sigma\right)^{2p}\left(\mathrm{e}^\psi \mathrm{e}^z-\sigma\right)^{2q-1} \mathrm{d} \mu=-\frac{\overline{f}}{2q}|V|.
		\end{equation*}
		As a consequence, we have
		\begin{equation*}
			\begin{aligned}
				\frac{\Lambda_1^{-1}}{2q|V|}<\frac{\overline{f}}{2q} \leq \big(\mathrm{e}^{2\tilde{C} \left(\Lambda_2+\Lambda_3\right)}+1\big)^{2p}\mathrm{e}^{\max _V v}\left(\mathrm{e}^{\max _V v}+1\right)^{2q-1}.
			\end{aligned}
		\end{equation*}
		Hence, there exists a constant $C_1>0$, depending only on $\Lambda_2, \Lambda_3$ and the graph $V$, such that
		\begin{equation*}
			\max_V v \geq -C_1.
		\end{equation*}
		In view of \eqref{CSH-graph-49}, it follows that
		\begin{equation} \label{CSH-graph-z50}
			\min _V v \geq -C_1-c.
		\end{equation}
		In the same way, integrating both sides of the first equation in \eqref{CSH-graph-46} leads to
		\begin{equation*}
			\int_V \mathrm{e}^\varphi \mathrm{e}^w\left(\mathrm{e}^\varphi \mathrm{e}^w-\sigma\right)^{2p-1}\left(\mathrm{e}^\psi \mathrm{e}^z-\sigma\right)^{2q} \mathrm{d} \mu=-\frac{\overline{g}}{2p}|V|.
		\end{equation*}
		As a consequence, there holds
		\begin{equation*}
			\begin{aligned}
				\frac{\Lambda_1^{-1}}{2p|V|}<\frac{\overline{g}}{2p} \leq \big(\mathrm{e}^{2\tilde{C} \left(\Lambda_2+\Lambda_3\right)}+1\big)^{2q}\mathrm{e}^{\max _V u}\left(\mathrm{e}^{\max _V u}+1\right)^{2p-1}.
			\end{aligned}
		\end{equation*}
		We derive that there exists a constant $C_2>0$, depending only on $\Lambda_2, \Lambda_3$ and the graph $V$, such that
		\begin{equation*}
			\max_V u \geq -C_2.
		\end{equation*}
		By using \eqref{CSH-graph-49}, it holds that
		\begin{equation} \label{CSH-graph-z51}
			\min _V u \geq -C_2-c.
		\end{equation}
		In view of \eqref{CSH-graph-z102}, \eqref{CSH-graph-z50} and \eqref{CSH-graph-z51}, we have
		\begin{equation*}
			\begin{aligned}
				\|u\|_X+\|v\|_X
				\leq& \max\big\{4\tilde{C} \left(\Lambda_2+\Lambda_3\right), C_1+c, C_2+c\big\}.
			\end{aligned}
		\end{equation*}
	\end{proof}

	Now, we proceed to calculate the topological degree of the map as defined in \eqref{CSH-graph-1.5}.
	
	\begin{proof}[\textbf{Proof of Theorem \ref{CSH-graph-theo-1.4}.}]
		Define a map $G: X \times X \times[0,1] \rightarrow X \times X$ by
		\begin{equation*}
			G(u, v, \sigma) =
			\begin{pmatrix}
				-\Delta u+2q\mathrm{e}^v\left(\mathrm{e}^u-\sigma\right)^{2p}\left(\mathrm{e}^v-\sigma\right)^{2q-1}+f\\
				-\Delta v+2p\mathrm{e}^u\left(\mathrm{e}^u-\sigma\right)^{2p-1}\left(\mathrm{e}^v-\sigma\right)^{2q}+g
			\end{pmatrix}^\top
		\end{equation*}
		for any $(u, v, \sigma) \in X \times X \times[0,1]$. Obviously, it holds that $G \in C^2(X \times X \times[0,1], X \times X)$. On the one hand, by Theorem \ref{CSH-graph-theo-1.3}, there exists some $R_0>0$ such that for any $R \geq R_0$, we have
		\begin{equation*}
			0 \notin G\left(\partial B_R, \sigma\right), \ \text {for all} \ \sigma \in[0,1].
		\end{equation*}
		Consequently, the homotopic invariance of the topological degree implies
		\begin{equation} \label{CSH-graph-52}
			\deg\left(G(\cdot, 1), B_R,(0,0)\right)=\deg\left(G(\cdot, 0), B_R,(0,0)\right).
		\end{equation}
		Here we denote the open ball in $X\times X$ with radius $R$ as
		\begin{equation*}
			B_R=\left\{(u, v) \in X \times X:\|u\|_X+\|v\|_X<R\right\}
		\end{equation*}
		and its boundary as
		\begin{equation*}
			\partial B_R=\left\{(u, v) \in X \times X:\|u\|_X+\|v\|_X=R\right\}.
		\end{equation*}
		On the other hand, we calculate $\deg\left(G(\cdot, 0), B_R,(0,0)\right)$. Since $\overline{f},\overline{g}>0$, integrating both sides of the first equation of the system
		\begin{equation} \label{CSH-graph-4.10}
			\begin{cases}
				\Delta u=2q\mathrm{e}^{2pu} \mathrm{e}^{2qv}+f  \,\,\,& \text {in} \  V,\\
				\Delta v=2p\mathrm{e}^{2pu} \mathrm{e}^{2qv}+g \,\,\, &\text {in} \  V,
			\end{cases}
		\end{equation}
		we get a contradiction, provided that \eqref{CSH-graph-4.10} is solvable. This implies
		\begin{equation*}
			\{(u, v) \in X \times X: G(u, v, 0)=(0,0)\}=\emptyset.
		\end{equation*}
		As a consequence, we have
		\begin{equation} \label{CSH-graph-54}
			\deg\left(G(\cdot, 0), B_R,(0,0)\right)=0.
		\end{equation}
		Combining \eqref{CSH-graph-52} with \eqref{CSH-graph-54}, we get the desired result.
	\end{proof}
	
	Let $\mathcal{G}: X \times X \rightarrow \mathbb{R}$ be a functional defined as in \eqref{CSH-graph-1.6}. Note that the critical point of $\mathcal{G}$ is a solution to the Chern-Simons system \eqref{CSH-graph-1.3}. Similar to the proof of Lemma \ref{CSH-graph-lem-2.10}, we have the following lemma.
	
	\begin{lemma} \label{CSH-graph-lemma-5.1}
		Under the assumptions $\lambda>0, \overline{f}>0$ and $\overline{g}>0$, it holds that $\mathcal{G}$ satisfies the Palais-Smale condition at any level $c \in \mathbb{R}$.
	\end{lemma}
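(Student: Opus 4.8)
The plan is to imitate the proof of Lemma \ref{CSH-graph-lem-2.10}, replacing the single-equation a priori bound with the system estimate of Theorem \ref{CSH-graph-theo-1.3}. Since $X\times X\cong\mathbb{R}^{2\ell}$ is finite-dimensional, verifying the Palais-Smale condition reduces to proving that every Palais-Smale sequence is bounded: boundedness then yields a convergent subsequence whose limit is automatically a critical point by the continuity of $\mathcal{G}'$.

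First I would take a sequence $\{(u_k,v_k)\}\subset X\times X$ with $\mathcal{G}(u_k,v_k)\to c$ and $\mathcal{G}'(u_k,v_k)\to 0$ in $(X\times X)^*\cong\mathbb{R}^{2\ell}$. Reading off the two components of $\mathcal{G}'$ from \eqref{CSH-graph-12}, the condition $\mathcal{G}'(u_k,v_k)\to 0$ is equivalent to
\begin{equation*}
	\begin{cases}
		-\Delta u_k + 2q\mathrm{e}^{v_k}\left(\mathrm{e}^{u_k}-1\right)^{2p}\left(\mathrm{e}^{v_k}-1\right)^{2q-1} + f = o_k(1),\\
		-\Delta v_k + 2p\mathrm{e}^{u_k}\left(\mathrm{e}^{u_k}-1\right)^{2p-1}\left(\mathrm{e}^{v_k}-1\right)^{2q} + g = o_k(1),
	\end{cases}
\end{equation*}
where $o_k(1)\to 0$ uniformly on $V$ as $k\to\infty$ (all norms on $\mathbb{R}^{\ell}$ being equivalent). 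Setting $f_k:=f-o_k(1)$ and $g_k:=g-o_k(1)$ for the respective error terms, the pair $(u_k,v_k)$ solves the system \eqref{CSH-graph-1.4} with $\sigma=1$ and with $f,g$ replaced by $f_k,g_k$.

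Next I would check that $f_k,g_k$ satisfy the hypotheses of Theorem \ref{CSH-graph-theo-1.3} uniformly in $k$. On the one hand, $\|f_k\|_X\le\|f\|_X+\|o_k(1)\|_X$ and likewise for $g_k$, so both are bounded by some $\Lambda_2>0$. On the other hand, since $\overline{f}>0$ and $\overline{g}>0$ we have $\int_V f\,\mathrm{d}\mu=|V|\overline{f}>0$ and $\int_V g\,\mathrm{d}\mu>0$; because $\int_V o_k(1)\,\mathrm{d}\mu\to 0$, for all large $k$ the integrals $\int_V f_k\,\mathrm{d}\mu$ and $\int_V g_k\,\mathrm{d}\mu$ stay positive and bounded away from both $0$ and $\infty$, so there exists $\Lambda_1>0$ with $\Lambda_1^{-1}\le\bigl|\int_V f_k\,\mathrm{d}\mu\bigr|\le\Lambda_1$ and $\Lambda_1^{-1}\le\bigl|\int_V g_k\,\mathrm{d}\mu\bigr|\le\Lambda_1$; in particular $\overline{f_k},\overline{g_k}>0$ for large $k$. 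Applying Theorem \ref{CSH-graph-theo-1.3} with $\sigma=1$ then produces a constant $C>0$, independent of $k$, such that $\|u_k\|_X+\|v_k\|_X\le C$ for all large $k$.

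Finally, since $X\times X$ is finite-dimensional and $\{(u_k,v_k)\}$ is bounded, a subsequence converges to some $(u^*,v^*)\in X\times X$; the continuity of $\mathcal{G}'$ gives $\mathcal{G}'(u^*,v^*)=0$, so $(u^*,v^*)$ is a critical point and the Palais-Smale condition holds at every level $c\in\mathbb{R}$. I expect the only genuine work to lie in the uniform verification of the integral bounds for $f_k,g_k$: the crucial point is that $\overline{f}$ and $\overline{g}$ are \emph{strictly} positive, which keeps $\overline{f_k},\overline{g_k}$ uniformly away from zero and lets Theorem \ref{CSH-graph-theo-1.3} apply with constants independent of $k$.
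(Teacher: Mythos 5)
Your proposal is correct and follows essentially the same route as the paper: derive the perturbed system from $\mathcal{G}'(u_k,v_k)\to(0,0)$ via \eqref{CSH-graph-12}, obtain a uniform bound on $\|u_k\|_X+\|v_k\|_X$ from the a priori estimate of Theorem \ref{CSH-graph-theo-1.3}, and conclude by finite-dimensional compactness. The only (harmless) difference is one of packaging: the paper re-runs the method of Theorem \ref{CSH-graph-theo-1.3} on the perturbed system, whereas you absorb the errors into $f_k,g_k$, verify the hypotheses with $k$-independent constants $\Lambda_1,\Lambda_2$ (correctly noting that $\overline{f_k},\overline{g_k}$ stay uniformly positive), and invoke the theorem as a black box --- arguably the cleaner formulation.
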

	\begin{proof}
		Let $c \in \mathbb{R}$ and $\left\{\left(u_k, v_k\right)\right\}$ be a sequence in $X \times X$ such that $\mathcal{G}\left(u_k, v_k\right) \rightarrow c$ and
		\begin{equation*}
			\mathcal{G}^{\prime}\left(u_k, v_k\right) \rightarrow(0,0) \ \text {in} \ (X \times X)^* \cong \mathbb{R}^{\ell} \times \mathbb{R}^{\ell}.
		\end{equation*}
		This together with \eqref{CSH-graph-12} gives
		\begin{equation} \label{CSH-graph-55}
			\begin{cases}
				-\Delta u_k+2q\mathrm{e}^{v_k}\left(\mathrm{e}^{u_k}-1\right)^{2p}\left(\mathrm{e}^{v_k}-1\right)^{2q-1}+f=o_k(1)  \,\,\,&\text {in} \  V,\\
				-\Delta v_k+2p\mathrm{e}^{u_k}\left(\mathrm{e}^{u_k}-1\right)^{2p-1}\left(\mathrm{e}^{v_k}-1\right)^{2q}+g=o_k(1) \,\,\,&\text {in} \  V,
			\end{cases}
		\end{equation}
		where $o_k(1) \rightarrow 0$ uniformly on $V$ as $k \rightarrow \infty$. Comparing \eqref{CSH-graph-55} with the system \eqref{CSH-graph-1.3} and using the similar method as in the proof of Theorem \ref{CSH-graph-theo-1.3}, we have
		\begin{equation*}
			\left\|u_k\right\|_X+\left\|v_k\right\|_X \leq C
		\end{equation*}
		for some constant $C>0$, provided that $k \geq k_1$ for some sufficiently large $k_1 \in \mathbb{Z}^{+}$. Since $V$ is finite, $X$ is pre-compact. Therefore, up to a subsequence, $u_k \rightarrow u^*$ and $v_k \rightarrow v^*$ uniformly in $V$ for some functions $u^* \in X$ and $v^* \in X$. Clearly, $\mathcal{G}^{\prime}\left(u^*, v^*\right)=(0,0)$. Thus $\mathcal{G}$ satisfies the (PS)$_c$ condition.
	\end{proof}

	Finally, we prove a partial multiple solutions result for the system \eqref{CSH-graph-1.3}.
	
	\begin{proof}[\textbf{Proof of Theorem \ref{CSH-graph-theo-1.5}.}]
		We distinguish two hypotheses to proceed.
		
		\textbf{Case 1.} Assume that $\mathcal{G}$ has a non-degenerate critical point $\left(u, v\right)$.
		Since $\left(u, v\right)$ is non-degenerate, we have
		\begin{equation*}
			\det D^2 \mathcal{G}\left(u, v\right) \neq 0.
		\end{equation*}
		Suppose that $\left(u, v\right)$ is the unique critical point of $\mathcal{G}$. Then by the definition of the Brouwer degree, we conclude that for all $R>\left\|u\right\|_X+\left\|v\right\|_X$,
		\begin{equation} \label{CSH-graph-56}
			\deg\left(D \mathcal{G}, B_R,(0,0)\right)=\operatorname{sgn} \det D^2 \mathcal{G}\left(u, v\right) \neq 0.
		\end{equation}
		Here and in the sequel, as in the proof of Theorem \ref{CSH-graph-theo-1.1}, $B_R$ is a ball centered at $(0,0)$ with radius $R$. Notice that $D \mathcal{G}(u, v)=G(u, v)$ for all $(u, v) \in X \times X$, where $G$ is defined as in \eqref{CSH-graph-1.5}. By Theorem \ref{CSH-graph-theo-1.4}, it holds that
		\begin{equation*}
			\deg\left(D \mathcal{G}, B_R,(0,0)\right)=\deg\left(G, B_R,(0,0)\right)=0,
		\end{equation*}
		contradicting \eqref{CSH-graph-56}. Hence, $\mathcal{G}$ must have at least two critical points.
		
		\textbf{Case 2.} Suppose that $\mathcal{G}$ has a locally strict minimum critical point $\left(\varphi, \psi\right)$.
		Without loss of generality, we may assume $u$ is the unique critical point of $\mathcal{G}$. 	Otherwise, $\mathcal{G}$ already has at least two critical points, and the proof is complete. According to \cite[Chapter 1, page 32]{CKC}, the $r$-th critical group of $\mathcal{G}$ at the critical point $\left(\varphi, \psi\right)$ is given by
		\begin{equation*}
			C_r\left(\mathcal{G},\left(\varphi, \psi\right)\right)=H_r\left(\mathcal{G}^c \cap U,\left\{\mathcal{G}^c \backslash\left\{\left(\varphi, \psi\right)\right\}\right\} \cap U, \mathbb{Z}\right),
		\end{equation*}
		where $\mathcal{G}^c=\left\{(u, v) \in X \times X: \mathcal{G}(u, v) \leq c\right\}$, $U$ is a neighborhood of $\left(\varphi, \psi\right) \in X \times X$ and $\mathbb{Z}$ is the coefficient group of $H_r$. Since $\left(\varphi, \psi\right)$ is a locally strict minimum critical point, we easily find that
		\begin{equation*}
			C_r\left(\mathcal{G},\left(\varphi, \psi\right)\right)=\delta_{r 0} \mathbb{Z}=\begin{cases}
				\mathbb{Z}, & \ \text{if} \  r=0, \\
				\{0\}, & \ \text{if} \  r\neq 0.
			\end{cases}
		\end{equation*}
		By Lemma \ref{CSH-graph-lemma-5.1}, $\mathcal{G}$ satisfies the Palais-Smale condition. Then applying \cite[Chapter 1, Theorem 3.2]{CKC} and Theorem \ref{CSH-graph-theo-1.4}, we obtain
		\begin{equation*}
			\begin{aligned}
				0=\deg\left(G, B_R,(0,0)\right) & =\deg\left(D \mathcal{G}, B_R,(0,0)\right) \\
				& =\sum_{r=0}^{\infty}(-1)^r \operatorname{rank} C_r\left(\mathcal{G},\left(\varphi, \psi\right)\right) \\
				& =1,
			\end{aligned}
		\end{equation*}
		provided that $R>\left\|\varphi\right\|_X+\left\|\psi\right\|_X$, which is impossible. Thus $\mathcal{G}$ must have another critical point.
	\end{proof}


\begin{thebibliography}{99}
		
		\bibitem{CY}
		{Caffarelli, L.A, Yang, Y.S.:}{Vortex condensation in the Chern-Simons Higgs model: An existence theorem.}
		\sl Comm. Math. Phys.
		\rm 168, 321-336 (1995).
		
		\bibitem{CI}
		{ Chae, D., Imanuvilov, O.Y.:}{ The existence of non-topological multivortex solutions in the relativistic
			self-dual Chern-Simons theory.} \sl Comm. Math. Phys. \rm 215, 119-142 (2000).
		
		\bibitem{CKC}
		{Chang, K.C.:}
		{Infinite dimensional Morse theory and multiple solution problems.}
		\sl Birkh\"auser, Boston
		\rm (1993).
		
		\bibitem{CH}
		{Chao, R., Hou, S.:}
		{Multiple solutions for a generalized Chern-Simons equation on graphs.}
		\sl J. Math. Anal. Appl.
		\rm 519, 126787 (2023).
		
		
		\bibitem{CLZ1}
		{Chow, S.N., Li, W.C., Zhou, H.M.:}
		{A discrete Schr\"odinger equation via optimal transport on graphs.}
		\sl J. Funct. Anal.
		\rm 276, 2440-2469 (2019).
		
		
		\bibitem{DJLW}	
		{Ding, W., Jost, J., Li, J., Wang, G.:} {An analysis of the two-vortex case in the Chern-Simons Higgs model.}
		\sl Calc. Var. Partial Differential Equations
		\rm 7(1), 87-97 (1998).
		
		\bibitem{DJLPW}		
		{ Ding, W., Jost, J., Li, J., Peng, X., Wang, G.:}{ Self duality equations for Ginzburg-Landau and Seiberg-Witten type functionals with 6th order potentials.}
		\sl Comm. Math. Phys.
		\rm 217, 383-407 (2001).
		
		
		\bibitem{GJ}
		{Ge, H., Jiang, W.:}
		{Kazdan-Warner equation on infinite graphs.}
		\sl J. Korean Math. Soc.
		\rm 55, 1091-1101 (2018).
		
		\bibitem{AYY0}
		{Grigor'yan, A., Lin, Y., Yang, Y.:}
		{Kazdan-Warner equation on graph.}
		\sl Calc. Var. Partial Diff. Equ.
		\rm 55 (4), 92 (2016).
		
		\bibitem{AYY1}
		{Grigor'yan, A., Lin, Y., Yang, Y.:}
		{Yamabe type equations on graphs.}
		\sl J. Diff. Equ.
		\rm 261 (9), 4924-4943 (2016).
		
		\bibitem{AYY2}
		{Grigor'yan, A., Lin, Y., Yang, Y:}
		{Existence of positive solutions to some nonlinear equations on locally finite graphs.}
		\sl Sci. China Math.
		\rm 60 (7), 1311-1324 (2017).
		
		\bibitem{HX}
		{Han, X.:}
		{The existence of multi-vortices for a generalized self-dual Chern-Simons model.}
		\sl Nonlinearity
		\rm 26 (3), 805-835 (2013).
		
		\bibitem{HSM}
		{Han, X.L., Shao, M.Q.:}
		{$p$-Laplacian equations on locally finite graphs.}
		\sl  Acta Math. Sin. (Engl. Ser.)
		\rm 37 (11), 1645-1678 (2021).
		
		\bibitem{HSZ}
		{Han, X.L., Shao, M.Q., Zhao, L.:}
		{Existence and convergence of solutions for nonlinear biharmonic equations on graphs.}
		\sl  J. Diff. Equ.
		\rm 268 (7), 3936-3961 (2020).
		
		\bibitem{HKP}		
		{Hong, J., Kim, Y., Pac, P.:} {Multivortex solutions of the abelian Chern-Simons-Higgs theory.}
		\sl Phys. Rev. Lett.
		\rm 64 (19), 2230-2233 (1990).
		
		\bibitem{HLLY}
		{Horn, P., Lin, Y., Liu, S., Yau, S.T.:}
		{Volume doubling, Poincar$\acute{e}$ inequality and Gaussian heat kernel estimate for non-negatively curved graphs.}
		\sl  J. Reine Angew. Math.
		\rm 757, 89-130 (2019).
		
		\bibitem{HQ}
		{Hou, S., Qiao, W.:}
		{Solutions to a generalized Chern-Simons Higgs model on finite graphs by topological degree.}
		\sl J. Math. Phys.
		\rm 65, 081503 (2024).
		
		\bibitem{HS}
		{Hou, S., Sun, J.:}
		{Existence of solutions to Chern-Simons Higgs equations on graphs.}
		\sl Calc. Var. Part. Diff. Equ.
		\rm 61 (4), 139 (2022).
		
		\bibitem{HXW}
		{Hua, B., Xu, W.:}
		{Existence of ground state solutions to some nonlinear Schr\"odinger equations on lattice graphs.}
		\sl Calc. Var. Partial Differ. Equ.
		\rm 62 (4), 127 (2023).	
		
		\bibitem{HLY}
		{Huang, A., Lin, Y., Yau, S.T.:}
		{Existence of solutions to mean field equations on graphs.}
		\sl Comm. Math. Phys.
		\rm 377, 613-621 (2020).
		
		\bibitem{HX1}
		{Huang, X.:}
		{On uniqueness class for a heat equation on graphs.}
		\sl  J. Math. Anal. Appl.
		\rm  393, 377-388 (2012).	
		
		\bibitem{JW}	
		{Jackiw, R., Weinberg, E.:} {Self-dual Chern-Simons vortices.}
		\sl Phys. Rev. Lett.
		\rm 64 (19), 2234-2237 (1990).
		
		
		\bibitem{LSY}
		{Li, J., Sun, L., Yang, Y:}
		{Topological degree for Chern-Simons Higgs models on finite graphs.}
		\sl Calc. Var. Part. Diff. Equ.
		\rm 63, 81 (2024).
		
		\bibitem{LYY1}
		{Li,Y.Y.:}
		{Harnack type inequality: the method of moving planes.}
		\sl Comm. Math. Phys.
		\rm 200, 421-444 (1999).
		
		\bibitem{LPY}	
		{Lin, C.S., Ponce, A.C., Yang, Y.S.:} { A system of elliptic equations arising in Chern-Simons field theory.}
		\sl J. Funct. Anal.
		\rm 247, 289-350 (2007).
		
		\bibitem{LWY}
		{Lin, Y., Wu, Y.T.:}
		{The existence and nonexistence of global solutions for a semilinear heat equation on graphs.}
		\sl Calc. Var. Partial Differ. Equ.
		\rm 56 (4), 102 (2017).
		
		\bibitem{LWY1}
		{Lin, Y., Wu, Y.T.:}
		{Blow-up problems for nonlinear parabolic equations on locally finite graphs.}
		\sl Acta Math. Scientia
		\rm  38B (3), 843-856 (2018).
		
		
		\bibitem{L}
		{Liu, Y.:}
		{Brouwer degree for mean field equation on graph.}
		\sl Bull. Korean Math. Soc.
		\rm 59 (5), 1305-1315 (2022).
		
		\bibitem{LY}
		{Liu, Y., Yang, Y.:}
		{Topological degree for Kazdan-Warner equation in the negative case on finite graph.}
		\sl Ann. Glob. Anal. Geom.
		\rm 65, 29 (2024).
		
		
		\bibitem{NT1}	
		{Nolasco, M., Tarantello, G.:} { On a sharp type inequality on two dimensional compact manifolds.}
		\sl Arch.
		Rational Mech. Anal.
		\rm 145, 161-195 (1998).
		
		\bibitem{NT2}
		{Nolasco, M., Tarantello, G.:} { Double vortex condensates in the Chern-Simons-Higgs theory.}
		\sl Calc. Var.
		Part. Differ. Equ.
		\rm 9, 31-94 (1999).
		
		\bibitem{SW}
		{Sun, L., Wang, L.:}
		{Brouwer degree for Kazdan-Warner equations on a connected finite graph.}
		\sl Adv. Math.
		\rm 404 (Part B), 108422 (2022).
		
		\bibitem{T}
		{Tarantello, G.:}
		{Multiple condensate solutions for the Chern-Simons-Higgs theory.}
		\sl J. Math. Phys.
		\rm 37 (8), 3769-3796 (1996).	
		
		
		
		\bibitem{W}
		{Wang, R.:}{ The existence of Chern-Simons vortices.} \sl Comm. Math. Phys. \rm 137, 587-597 (1991).
		
		
		\bibitem{ZZ}
		{Zhang, N., Zhao, L.:}
		{Convergence of ground state solutions for nonlinear Schr\"odinger equations on graphs.}
		\sl Sci. China Math.
		\rm 61 (8), 1481-1494 (2018).
		
	\end{thebibliography}
\end{document}